\documentclass[10pt]{article}
\textwidth= 5.00in
\textheight= 7.4in
\topmargin = 30pt
\evensidemargin=0pt
\oddsidemargin=55pt
\headsep=17pt
\parskip=.5pt
\parindent=12pt
\font\smallit=cmti10

\usepackage{amssymb,latexsym,amsmath,epsfig,amsthm} 

\makeatletter

\renewcommand\section{\@startsection {section}{1}{\z@}
{-30pt \@plus -1ex \@minus -.2ex}
{2.3ex \@plus.2ex}
{\normalfont\normalsize\bfseries}}

\renewcommand\subsection{\@startsection{subsection}{2}{\z@}
{-3.25ex\@plus -1ex \@minus -.2ex}
{1.5ex \@plus .2ex}
{\normalfont\normalsize\bfseries}}

\renewcommand{\@seccntformat}[1]{\csname the#1\endcsname. }

\makeatother

\begin{document}

\newtheorem{theorem}{Theorem}[section]
\newtheorem{lemma}[theorem]{Lemma}
\newtheorem{definition}[theorem]{Definition}
\newtheorem{remark}[theorem]{Remark}
\newtheorem{claim}[theorem]{Claim}
\newtheorem{example}[theorem]{Example}
\newtheorem{algorithm}[theorem]{Algorithm}
\newtheorem{corollary}[theorem]{Corollary}
\newtheorem{observation}[theorem]{Observation}
\newtheorem{conjecture}[theorem]{Conjecture}
\newtheorem{proposition}[theorem]{Proposition}

\newcommand{\N}{{\mathbb{N}}}
\newcommand{\Z}{{\mathbb{Z}}}
\newcommand{\F}{{\mathbb{F}}}
\newcommand{\E}{{\mathbb{E}}}

\newcommand{\D}[1]{{\mathbb#1}}
\newcommand{\PP}{{\D{P}}}
\newcommand{\ZZ}{{\D{Z}}}
\newcommand{\QQ}{{\D{Q}}}
\newcommand{\RR}{{\D{R}}}
\newcommand{\CC}{{\D{C}}}
\newcommand{\NN}{{\D{N}}}
\newcommand{\SSS}{{\D{S}}}
\newcommand{\EE}{{\D{E}}}
\newcommand\aplus{\stackrel{\alpha}{+}}
\newcommand\adot{\stackrel{\alpha}{\cdot}}
\newcommand\gdot{\stackrel{\gamma}{\cdot}}
\newcommand\Aut{\mathrm{Aut}}
\newcommand\Inn{\mathrm{Inn}}
\newcommand\Out{\mathrm{Out}}
\newcommand\Sym{\mathrm{Sym}}
\newcommand\id{\mathrm{id}}  
\newcommand\chr{\mathrm{char}}
\newcommand\harrow{\stackrel{h}{\rightarrow}}
\newcommand\hiarrow{\stackrel{(h^\wedge)}{\rightarrow}}

\newcommand\ontop[3]{\mathrel{\raise#3\rlap{$\scriptstyle #1$}\mathord{#2}}}
\newcommand\gplus{\ontop{\mkern3mu\gamma}{+}{8pt}}
\newcommand\tdot{\ontop{\mkern-1mu\theta}{\cdot}{6pt}}
\newcommand\tpdot{\ontop{\mkern-2mu\theta'}{\cdot}{6pt}}
\newcommand\tidot{\ontop{\theta^{\!-\!1}}{\,\cdot\,}{6pt}}

\renewcommand{\thefootnote}{\fnsymbol{footnote}}

\begin{center}
\uppercase{\bf On the Inverse Erd\H{o}s-Heilbronn Problem for Restricted Set Addition in Finite Groups}
\vskip 20pt

{\bf Suren M. Jayasuriya\footnote[2]{\baselineskip=12pt This work was started while S.M. Jayasuriya and S.D. Reich were undergraduates at the University of Pittsburgh in a directed study course supervised by Dr. Jeffrey P. Wheeler.}}\\
{\smallit Department of Electrical and Computer Engineering, Cornell University, Ithaca, New York 14853, USA}\\
{ \tt sj498@cornell.edu}\\
\vskip 10pt

{\bf Steven D. Reich\footnotemark[2]} \\
{\smallit Department of Mathematics, The University of Pittsburgh, Pittsburgh, Pennsylvania, 15260, USA}\\
{ \tt sdr32@pitt.edu}
\vskip 10pt

{\bf Jeffrey P. Wheeler} \\
{\smallit Department of Mathematics, The University of Pittsburgh, Pittsburgh, Pennsylvania, 15260, USA}\\
{ \tt jwheeler@pitt.edu}
\end{center}
\vskip 30pt

\vskip 30pt

\renewcommand{\thefootnote}{}

\footnote{2010 \emph{Mathematics Subject Classification}: Primary 11P99; Secondary 05E15, 20D60.}

\footnote{\emph{Key words and phrases}: Cauchy-Davenport Theorem, Erd\H{o}s-Heilbronn Problem, additive number theory, sumsets, restricted set addition, finite groups, inverse sumset results, critical pair.}
\renewcommand{\thefootnote}{\arabic{footnote}}
\setcounter{footnote}{0}

\centerline{\bf Abstract}
\noindent
We provide a survey of results concerning both the direct and inverse problems to the Cauchy-Davenport theorem and Erd\H{o}s-Heilbronn problem in Additive Combinatorics. We formulate a conjecture concerning the inverse Erd\H{o}s-Heilbronn problem in nonabelian groups. We prove an inverse to the Dias da Silva-Hamidoune Theorem to $\Z/n\Z$ where $n$ is composite, and we generalize this result for nonabelian groups. 


\section{Introduction}

A basic object in additive combinatorics/additive number theory is the sumset of sets $A$ and $B$:

\begin{definition}\index{sumset}[Sumset]\label{definition:A+B}\ \\
$$A+B := \{ a+b \mid a \in A, b \in B \}.$$
\end{definition}

A simple example of a problem in Additive Number Theory is given two subsets $A$ and $B$ of a set of integers, what facts can we determine about sumset $A+B$? One such classic problem was a conjecture of Paul Erd\H{o}s and Hans Heilbronn \cite{Erdos Col. Con.}, an open problem for over $30$ years until proved in $1994$.  The conjecture originates from a  theorem proved by Cauchy \cite{Cauchy} in $1813$ and independently
by Davenport \cite{Davenport} in $1935$.

In this paper, we present a survey of results concerning both the Cauchy-Davenport theorem and Erd\H{o}s-Heilbronn problem\footnote{\baselineskip=12pt See Appendix for a timeline summarizing these results}. We introduce the two main types of problems: direct and inverse problems, and we then list results and extensions of these theorems into groups. In particular, we formulate a conjecture concerning the inverse Erd\H{o}s-Heilbronn problem into nonabelian groups and provide a nontrivial example to support it. In section 5, we present an elementary proof providing an inverse to the Dias da Silva-Hamidoune theorem for $\Z/n\Z$ where $n$ is composite. In section 6, we generalize this result to nonabelian groups which proves one direction of the conjecture under an assumption that our sets are arithmetic progressions with the same common difference. This result is a first step towards proving the full inverse Erd\H{o}s-Heilbronn problem in nonabelian groups.

\section{The Cauchy-Davenport Theorem and Erd\H{o}s-Heilbronn Problem}

As described by Melvyn B. Nathanson in \cite{Nath}, a {\em direct
problem}\index{direct problem} in Additive Number Theory is a
problem concerned with properties of the resulting sumset.  We first consider two direct results; the first a classic result and the second a simple adaptation of the first - yet significantly more subtle to prove.

\subsection{The Cauchy-Davenport Theorem}\label{section:C-D Theorem}

The first result is a theorem proved by
Cauchy\footnote{\baselineskip=12pt Cauchy used this theorem to prove
that $Ax^2 + By^2 + C \equiv 0 (\!\!\!\!\mod p)$ has solutions
provided that $ABC \not \equiv 0$.  Cauchy then used this to provide a new proof of a lemma Lagrange used to establish his four squares theorem in 1770 \cite{Alon - CombNull}.} in $1813$
\cite{Cauchy} and independently by Davenport in $1935$
\cite{Davenport} (Davenport discovered in $1947$
\cite{DavenportHist} that Cauchy had previously proved the theorem).
In particular,

\begin{theorem}\label{theorem:cauchy-davenport}[Cauchy-Davenport]\index{Cauchy-Davenport Theorem}\ \\
Let $A$ and $B$ be nonempty subsets of $\ZZ/p\ZZ$ with $p$ prime.
Then $|A+B| \geq \min \{p, |A|+|B|-1\}$ where $A+B := \{a+b\mid a
\in A \text{ and } b \in B\}.$
\end{theorem}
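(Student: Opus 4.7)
The plan is to proceed by induction on $|B|$ (the roles of $A$ and $B$ being symmetric). The base case $|B|=1$ is immediate, since $A+B$ is then a translate of $A$ and has exactly $|A| = |A|+|B|-1$ elements. For the inductive step, I would first dispose of the ``large'' regime $|A|+|B| > p$: here for any $c \in \ZZ/p\ZZ$ the sets $A$ and $c-B$ satisfy $|A| + |c-B| > p$, so they must intersect inside a $p$-element ambient group, which forces $c \in A+B$. Hence $A+B = \ZZ/p\ZZ$ and the bound holds trivially.

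It remains to treat $|A|+|B| \leq p$ with $|B| \geq 2$. Here I would invoke Dyson's $e$-transform: for $e \in \ZZ/p\ZZ$ set $A_e := A \cup (B+e)$ and $B_e := B \cap (A-e)$. A short inclusion–exclusion calculation, using $|B \cap (A-e)| = |(B+e) \cap A|$, gives $|A_e|+|B_e| = |A|+|B|$, and a two-case verification (according as $x \in A$ or $x \in B+e$) shows $A_e + B_e \subseteq A+B$. The aim is to choose $e$ so that $0 < |B_e| < |B|$; then the inductive hypothesis applies to the pair $(A_e, B_e)$ and yields
$$|A+B| \;\geq\; |A_e + B_e| \;\geq\; |A_e|+|B_e|-1 \;=\; |A|+|B|-1,$$
closing the induction.

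The key technical step — and the only place primality enters — is showing such an $e$ exists. Setting $e = a-b$ for any fixed $a \in A$, $b \in B$ forces $b \in B_e$, so $B_e$ is nonempty. If no such choice achieved $|B_e| < |B|$, then $B + (a-b) \subseteq A$ for every $a \in A$, $b \in B$; equivalently, $A$ is invariant under translation by every element of $B - B$. Since $|B| \geq 2$, the set $B-B$ contains a nonzero element $d$, and because $p$ is prime $\langle d \rangle = \ZZ/p\ZZ$, which would force $A = \ZZ/p\ZZ$, contradicting $|A| + |B| \leq p$.

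The main obstacle is precisely this last step. Primality is essential: over $\ZZ/n\ZZ$ with $n$ composite, the subgroup generated by $B-B$ may be proper, and in that case the Cauchy–Davenport bound can genuinely fail (take $A = B$ to be a nontrivial proper subgroup). In the prime case, the cyclic-subgroup argument above is exactly what rules out the pathological configuration in which the $e$-transform gets stuck, and thus produces the transform needed to run the induction.
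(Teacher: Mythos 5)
Your proof is correct, but note that the paper itself gives no proof of this theorem: it is a survey, and Theorem 2.1 is stated as a classical result with citations to Cauchy and Davenport. So the comparison is with the classical literature rather than with an argument in the text. What you have written is essentially Davenport's original transform proof (the Dyson $e$-transform formulation): the base case $|B|=1$, the pigeonhole disposal of the regime $|A|+|B|>p$, the identities $|A_e|+|B_e|=|A|+|B|$ and $A_e+B_e\subseteq A+B$, and the termination argument all check out. Your key step is also sound: choosing $e=a-b$ guarantees $b\in B_e$, and if $B_e=B$ for every such $e$ then $B+(a-b)\subseteq A$ for all $a\in A$, $b\in B$, whence $A+d=A$ for every $d\in B-B$; since $|B|\geq 2$ produces $d\neq 0$ and $p$ prime gives $\langle d\rangle=\ZZ/p\ZZ$, this forces $A=\ZZ/p\ZZ$, contradicting $|A|+|B|\leq p$. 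You are right that this is the only place primality enters, and your remark about failure for composite $n$ (a proper subgroup $A=B$) matches the paper's motivation for introducing $p(G)$ in the group-theoretic extensions. One contextual point worth knowing: the modern alternative is the polynomial method (Alon's Combinatorial Nullstellensatz, cited in this paper), which proves Cauchy--Davenport in a few lines and, unlike the transform argument, adapts to the restricted sumset $A\dot{+}B$ of the Erd\H{o}s--Heilbronn problem --- the transform approach you use is well known to break down there, which is precisely why that conjecture stayed open for thirty years.
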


We note that in $1935$ Inder Cholwa \cite{Chowla} extended the
result to composite moduli $m$ when $0 \in B$ and the other members
of $B$ are relatively prime to $m$.

\subsection{A Conjecture of Erd\H{o}s-Heilbronn}

The second result we consider is a slight modification of the Cauchy-Davenport Theorem and is surprisingly much more difficult. In the early $1960$'s, Paul Erd\H{o}s and Hans Heilbronn conjectured that if the sumset addition is restricted to only distinct elements, then the lower bound is reduced by two.
Erd\H{o}s stated this conjecture during a 1963 University of Colorado number theory
conference \cite{Erdos Col. Con.}. While the conjecture did not appear in their $1964$ paper on sums of sets of congruence
classes \cite{Erdos Heilbronn},  Erd\H{o}s lectured on the conjecture (see \cite{Nath}, p.106). The conjecture was formally stated in \cite{Erdos} and \cite{Erdos Graham} as follows: 

\begin{theorem}\label{problem:erdos-heilbronn}[Erd\H{o}s-Heilbronn Problem]\label{theorem:EHP}
\index{The Conjecture of Erd\H{o}s-Heilbronn}\ \\
Let $p$ be a prime and $A$, $B \subseteq \ZZ/p\ZZ$ with $A,B \neq
\emptyset$. Then $$|A \dot{+} B| \geq \min \{
p, |A|+|B|-3 \},$$ where $A \dot{+} B := \{ a + b \text{ mod p } \mid
a \in A$, $b \in B$ and $a \neq b$ \}.
\end{theorem}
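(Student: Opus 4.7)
The plan is to apply the polynomial method via Alon's Combinatorial Nullstellensatz. Arguing by contradiction, suppose $|A \dot{+} B| < \min\{p,\,|A|+|B|-3\}$. After a mild reduction (pass to subsets $A' \subseteq A$ and $B' \subseteq B$ with $|A'|+|B'| = p+3$ if necessary, so that a lower bound of $p$ for $|A' \dot{+} B'|$ forces $A \dot{+} B = \ZZ/p\ZZ$), I may assume that the failure has the form $|A \dot{+} B| \leq |A|+|B|-4 < p$ with $|A|+|B| \leq p+3$. Set $k := |A|$, $\ell := |B|$, and fix a set $C \subseteq \ZZ/p\ZZ$ containing $A \dot{+} B$ with $|C| = k+\ell-4$.

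The central device is the polynomial
$$P(x,y) := (x-y)\prod_{c \in C}(x+y-c) \;\in\; (\ZZ/p\ZZ)[x,y],$$
of total degree $k+\ell-3$. For every $(a,b) \in A \times B$ we have $P(a,b) = 0$: if $a = b$ the first factor vanishes, and if $a \neq b$ then $a+b \in A \dot{+} B \subseteq C$, so one of the factors in the product vanishes.

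Next I would inspect the coefficients of the two monomials $x^{k-1} y^{\ell-2}$ and $x^{k-2} y^{\ell-1}$. Since both have total degree equal to $\deg P$, only the top-degree part $(x-y)(x+y)^{k+\ell-4}$ of $P$ contributes, and a direct expansion via the binomial theorem gives
$$[x^{k-1} y^{\ell-2}]\,P = \binom{k+\ell-4}{k-2} - \binom{k+\ell-4}{k-1}, \qquad [x^{k-2} y^{\ell-1}]\,P = \binom{k+\ell-4}{k-3} - \binom{k+\ell-4}{k-2}.$$

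The main obstacle is certifying that at least one of these coefficients is nonzero in $\ZZ/p\ZZ$. Because $k+\ell-4 < p$, every binomial $\binom{k+\ell-4}{j}$ with $0 \leq j \leq k+\ell-4$ is nonzero modulo $p$, so $\binom{k+\ell-4}{r} \equiv \binom{k+\ell-4}{s} \pmod{p}$ forces $r = s$ or $r+s = k+\ell-4$. Forcing both displayed coefficients to vanish then yields the two incompatible relations $k = \ell - 1$ and $k = \ell + 1$, so at least one coefficient survives. The Combinatorial Nullstellensatz, applied with the sets $A$ (of size $k > k-1$) and $B$ (of size $\ell > \ell-2$), or symmetrically, produces $(a,b) \in A \times B$ with $P(a,b) \neq 0$, contradicting the previous paragraph and establishing the theorem.
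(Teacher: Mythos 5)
The paper itself gives no proof of this statement: Theorem 2.2 is quoted as a known result, with the case $A=B$ attributed to Dias da Silva--Hamidoune and the general case to Alon--Nathanson--Ruzsa (ANR) via the polynomial method. So your proposal is to be measured against the ANR argument, and it is indeed that method, with a nice twist: instead of ANR's published route (prove $|A \dot{+} B| \geq \min\{p, |A|+|B|-2\}$ when $|A| \neq |B|$ using the single monomial $x^{k-1}y^{\ell-1}$, then deduce the $-3$ bound by deleting an element when $|A|=|B|$), you take $|C| = k+\ell-4$ directly and play two candidate monomials against each other. Your reduction to $|A|+|B| \leq p+3$ is valid and standard, your polynomial $P$ vanishes on $A \times B$ as claimed, and both top-coefficient computations $\bigl[x^{k-1}y^{\ell-2}\bigr]P = \binom{m}{k-2}-\binom{m}{k-1}$ and $\bigl[x^{k-2}y^{\ell-1}\bigr]P = \binom{m}{k-3}-\binom{m}{k-2}$ (with $m = k+\ell-4$) are correct.

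However, the lemma you invoke to certify nonvanishing is false as stated. It is not true that $m < p$ and $\binom{m}{r} \equiv \binom{m}{s} \pmod p$ force $r=s$ or $r+s=m$: take $m=4$, $p=5$, where $\binom{4}{0} = 1 \equiv 6 = \binom{4}{2} \pmod 5$ with $r=0$, $s=2$, $r \neq s$, $r+s \neq 4$. The monotonicity of binomial coefficients that makes this dichotomy true over $\Z$ does not survive reduction mod $p$, and ``every $\binom{m}{j}$ is nonzero mod $p$'' does not imply it. Fortunately, your two applications involve only \emph{adjacent} indices, and there the conclusion can be rescued by a ratio argument: since $\binom{m}{j+1} = \frac{m-j}{j+1}\binom{m}{j}$ and $\binom{m}{j} \not\equiv 0 \pmod p$, the congruence $\binom{m}{j} \equiv \binom{m}{j+1} \pmod p$ forces $m-j \equiv j+1 \pmod p$, and since $|m-2j-1| \leq m < p$ this gives $m = 2j+1$ exactly. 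With $j = k-2$ this yields $\ell = k+1$, and with $j = k-3$ it yields $\ell = k-1$, which are incompatible, so at least one coefficient survives and the Combinatorial Nullstellensatz applies as you intend. With that local repair (and a sentence disposing of the trivial small cases $k \leq 2$ or $\ell \leq 2$, where the bound is immediate or one of the coefficients is $\pm 1$), your proof is correct.
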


The conjecture was first proved for the case $A=B$ by J.A. Dias da
Silva and Y.O. Hamidoune in $1994$~\cite{Dias da Silva} using
methods from linear algebra with the more general case (namely $A \neq B$) established
by Noga Alon, Melvin B. Nathanson, and Imre Z. Ruzsa using their powerful
polynomial method in $1995$~\cite{Alon}.
\begin{remark}\rm
Throughout this paper, we will label the Erd\H{o}s-Heilbronn problem for the case where $A=B$ as the Dias da Silva-Hamidoune theorem.
\end{remark}


\section{Extension of the Problems to Groups}

The structures over which the Cauchy-Davenport Theorem holds have
been extended beyond $\Z/p\Z$.  Before stating the
extended versions, the following definition is needed.

\begin{definition}[Minimal Torsion Element]
\label{definition:minimal torsion element}
Let\/ $G$ be a group.  We
define $p(G)$ to be the smallest positive integer $p$ for which
there exists a nontrivial element\/ $g$ of\/ $G$ with $pg=0$ (or, if
multiplicative notation is used, $g^{p}=1$). If no such $p$ exists, we
write $p(G)=\infty$.
\end{definition}

\begin{remark}\label{remark:p(G) as a prime factor}\rm
When $G$ is finite, then $p(G)$ is the smallest prime factor of $|G|$ or equivalently, $p(G)$ is the size of the smallest nontrivial subgroup of $G$. 
\end{remark}

Equipped with this we can state the Cauchy-Davenport Theorem which was extended to abelian groups by Kneser~\cite{Kneser} and then to all finite
groups by Gy. K{\'a}rolyi~\cite{Gyula3}:

\begin{theorem}[Cauchy-Davenport Theorem for Finite Groups]\label{Finite Group C-D}
If\/ $A$ and\/ $B$ are non-empty subsets of a finite group $G$, then
$|AB|\ge\min\{p(G),|A|+|B|-1\}$, where
$AB:=\{a\cdot b\mid a\in A\text{ and\/ }b\in B\}$.
\end{theorem}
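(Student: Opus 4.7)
The plan is to prove Theorem~\ref{Finite Group C-D} by induction on $|G|$. The base case is when $|G|$ is prime; then $G\cong\Z/p\Z$ with $p(G)=p$, and the bound reduces to the classical Cauchy-Davenport theorem (Theorem~\ref{theorem:cauchy-davenport}).

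For the inductive step I assume $|G|$ is composite and $|AB|<|A|+|B|-1$ (otherwise we are done), and must show $|AB|\ge p(G)$. Following the Kneser template, I would look at the left stabilizer
$$H=\{g\in G:gAB=AB\},$$
which is a subgroup of $G$. If $H$ is nontrivial, then $|H|\ge p(G)$ by Remark~\ref{remark:p(G) as a prime factor}, and since $AB$ is a disjoint union of right cosets of $H$ we obtain $|AB|\ge|H|\ge p(G)$, as desired.

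The remaining case is $H=\{e\}$, which is where the argument departs from Kneser's abelian proof. I would split on the parity of $|G|$: if $|G|$ is even then $p(G)=2$, and $|AB|\ge 2$ follows at once from cancellation (if $a_1\ne a_2$ in $A$ and $b_0\in B$, then $a_1b_0\ne a_2b_0$). If $|G|$ is odd then the Feit--Thompson theorem makes $G$ solvable, so since $|G|$ is composite there is a nontrivial proper normal subgroup $N\triangleleft G$. I would then pass to the quotient $G/N$, analyze the slices $A\cap xN$ and $B\cap yN$ inside each $N$-coset, and combine the inductive bounds applied to $G/N$ and to $N$ (both of order strictly less than $|G|$).

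The main obstacle is the coset bookkeeping in this last step. Either some coset of $N$ lies entirely inside $AB$, in which case $|AB|\ge|N|\ge p(G)$ and we are finished, or every coset met by $AB$ is only partially covered; the hard case is the latter, where one must show that the contributions from the quotient problem in $G/N$ and from the restricted problems inside each coset sum to at least $|A|+|B|-1$, contradicting the assumption. In the nonabelian setting this is delicate because conjugation moves slices between cosets, so the slice sizes interact with the product less cleanly than in Kneser's abelian argument; a Dyson-style $e$-transform performed inside cosets of $N$ is the technique I would expect to use to push the induction through.
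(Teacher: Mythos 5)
First, a point of comparison: the paper offers no proof of Theorem~\ref{Finite Group C-D} at all --- it is quoted as a known result of Kneser \cite{Kneser} (abelian case) and K{\'a}rolyi \cite{Gyula3}, with Olson \cite{Olson} and Wheeler \cite{Wheeler} as alternatives --- so your attempt can only be measured against those cited proofs. Measured that way, your skeleton is essentially the right one: K{\'a}rolyi's and Wheeler's arguments do proceed by induction, do dispose of even $|G|$ trivially since then $p(G)=2$, and do invoke Feit--Thompson to make an odd-order $G$ solvable and extract a nontrivial proper normal subgroup $N$, reducing the problem to $N$ and $G/N$ (where, as you implicitly need, $p(N)\ge p(G)$ and $p(G/N)\ge p(G)$ since $|N|$ and $|G/N|$ divide $|G|$). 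Your stabilizer observation is also correct as far as it goes: if $H=\{g:gAB=AB\}$ is nontrivial then $AB$ is a union of right cosets of $H$, so $|AB|\ge|H|\ge p(G)$ by Remark~\ref{remark:p(G) as a prime factor}. But it buys nothing, because the case $H=\{e\}$ is untouched by it, and you discard the thread there.

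The genuine gap is that the step you defer --- the ``coset bookkeeping'' showing that the Cauchy--Davenport property for $N$ and for $G/N$ implies it for $G$ --- is the entire mathematical content of the theorem, and the tool you nominate for it does not work. The Dyson $e$-transform's conservation identity $|A'|+|B'|=|A|+|B|$ relies on commutativity: in the abelian version $A'=A\cup(B+e)$, $B'=B\cap(A-e)$, one cancels the shift $e$ inside the sum, which fails nonabelianly. The available one-sided surrogates, e.g.\ $A'=A\cup Ae$ and $B'=B\cap e^{-1}B$, do satisfy $A'B'\subseteq AB$, but conservation would require $|Ae\setminus A|\ge|B\setminus e^{-1}B|$, which is false in general. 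Likewise, Kneser's stabilizer inequality $|A+B|\ge|A+H|+|B+H|-|H|$ is simply unavailable: Kneser's theorem fails in nonabelian groups, which is exactly why Olson's theorem (Theorem 3.4 of this paper) takes its weaker form. K{\'a}rolyi's published proof of the extension step in \cite{Gyula3} replaces all of this with a delicate induction (normalizing $A$ and $B$ by translations to control how they meet cosets of $N$, then a multi-case analysis combining the quotient bound with slice bounds inside cosets), and nothing in your sketch substitutes for that lemma. So: correct architecture, matching the literature the paper cites, but the decisive extension lemma is missing and is not the routine verification your proposal suggests.
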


 J.E. Olson~\cite{Olson} also proved the following result which implies the Cauchy-Davenport theorem for arbitrary groups:
\begin{theorem} If \/ $A$ and \/ $B$ are two finite sets in a group, then both 
\\
$1)$ \hspace{0.4cm} $|A\cdot B| \geq |A| + \frac{1}{2}|B|$ unless $ A\cdot B \cdot (-B\cdot B) = A \cdot B$, and 
\\
$2)$ \hspace{0.4cm} there is a subset $S \subseteq A\cdot B$ and subgroup $H$ such that $|S| \geq |A| + |B| - |H|$ 
\\
\indent \hspace{0.4cm} and either $H\cdot S = S$ or $ S\cdot H = S$.
\end{theorem}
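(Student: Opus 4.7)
The plan is to prove both parts simultaneously by induction on $|B|$, using a Dyson-Kemperman style transform adapted to the non-abelian setting. The two parts are intertwined: the exception clause in $(1)$ and the stabilizer subgroup in $(2)$ both emerge in the ``stuck'' case when the transform fails to reduce $|B|$. For the base case $|B|=1$, say $B=\{b\}$, we have $AB=Ab$, so $|AB|=|A|$. Part $(1)$ holds vacuously since $B^{-1}B=\{e\}$ gives $AB\cdot B^{-1}B=AB$. For part $(2)$, take $S=AB$ and $H=\{e\}$, giving $|S|=|A|=|A|+|B|-|H|$ with $SH=S$.

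For the inductive step with $|B|\geq 2$, pick $t\in B^{-1}B\setminus\{e\}$ and consider the transform
$$A' := A\cup At^{-1}, \qquad B' := B\cap tB.$$
A direct check gives $A'B'\subseteq AB$ and $|A'|+|B'|=|A|+|B|$, with $|B'|<|B|$ unless simultaneously $tB\subseteq B$ and $At^{-1}\subseteq A$. In the strict case apply induction to $(A',B')$ and pull the conclusion back. Otherwise the inclusions hold for every $t\in B^{-1}B$; by finiteness they are equalities, so the symmetric set $B^{-1}B$ sits inside both the left stabilizer of $B$ and the right stabilizer of $A$. Since these stabilizers are subgroups, so is $H:=\langle B^{-1}B\rangle$, and $HB=B$, $AH=A$. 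This rigidity is precisely the ``stuck'' configuration: $AB$ decomposes compatibly with $H$-cosets, and descending to the quotient by $H$ and applying Theorem \ref{Finite Group C-D} at the quotient level gives $|AB|\geq |A|+|B|-|H|$ after scaling back by $|H|$, establishing $(2)$ with $S=AB$. For part $(1)$, the same coset decomposition yields $AB\cdot(B^{-1}B)=AB$, which is the exception clause.

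The main obstacle will be the non-abelian bookkeeping. In an abelian group, left and right invariance coincide and the stabilizer acts on $AB$ on both sides automatically; in a non-abelian group one must choose the transform on a consistent side and verify that the right-invariance of $AB$ needed for the exception clause in $(1)$ really does follow from the one-sided invariances of $A$ and $B$ produced by the induction (this may require replacing $H$ by a core or working with a carefully chosen $S\subsetneq AB$). A secondary subtlety is the coefficient $\tfrac{1}{2}$ in part $(1)$: rather than extracting it from a single application of the transform, one should amortize over all choices $t\in B^{-1}B$, so that each failure of the transform to produce strict progress contributes a distinct element to the stabilizer. Once the subgroup $H$ is isolated, both conclusions reduce cleanly to the quotient-level Cauchy-Davenport of Theorem \ref{Finite Group C-D}.
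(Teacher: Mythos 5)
You should know at the outset that the paper contains no proof of this statement: it is Olson's theorem, quoted verbatim as part of the survey with a citation to \cite{Olson}, so your proposal can only be measured against the literature --- and Olson's actual argument is not a Dyson/Kemperman-transform induction, essentially because no such transform survives the passage to nonabelian groups. Indeed, the central identity your induction rests on is false. In the abelian Dyson transform $A'=A\cup(e+B)$, $B'=B\cap(-e+A)$, the set adjoined to $A$ is a \emph{translate of the set removed from} $B$, and translation-invariance of cardinality gives $|A'|+|B'|=|A|+|B|$; your transform $A'=A\cup At^{-1}$, $B'=B\cap tB$ adjoins a translate of $A$ itself while deleting an amount governed by $B$, and the two quantities are unrelated. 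Concretely, in $\Z$ take $A=\{0,2,4,\dots,20\}$, $B=\{0,1,2\}$, $t=2\in B^{-1}B$: then $|A'|=12$, $|B'|=1$, so $|A'|+|B'|=13<14=|A|+|B|$, and the recursion has irretrievably lost ground. (The obvious repair --- pairing $A$ with a translate of $B$ as in the abelian case --- is exactly what fails here: with $A'=A\cup eB$, $B'=B\cap e^{-1}A$ one needs $bb'=b'b$ to get $A'B'\subseteq AB$.) Worse, the loss cannot always be avoided by choosing $t$ cleverly, and the ``stuck'' case that is supposed to produce the subgroup need never occur: in $G=\Z/5\Z$ with $A=G$, $B=\{0,1\}$, every $t\in(B^{-1}B)\setminus\{0\}$ strictly shrinks $|A'|+|B'|$ and $tB\ne B$, so your recursion terminates at $|B'|=1$ with $H=\{e\}$ and the bound $|S|\ge|A|+|B|-1=6$, which is \emph{false} since $|AB|=5$. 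The theorem holds there only via the subgroup $H=G$ (then $|A|+|B|-|H|=2$), a subgroup your scheme is structurally unable to discover, since it only ever produces $H$ from the terminal stuck configuration.

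Two smaller points. First, where your analysis does work is the fully stuck case, and there it works more directly than you suggest: if $tB=B$ and $At=A$ for all $t\in B^{-1}B$, then with $H=\langle B^{-1}B\rangle$ one gets $Hb\subseteq B\subseteq bH$, forcing $B=bH=Hb$ and $B^{-1}B=H$; hence $AB=AbH$ is right $H$-invariant, giving both the exception clause of $(1)$ and conclusion $(2)$ with $S=AB$ --- no quotient needed, which is fortunate because $H$ need not be normal, so ``descending to the quotient by $H$'' is undefined, and in any case Theorem~\ref{Finite Group C-D} applies only to \emph{finite} $G$ while Olson's theorem concerns arbitrary groups with $A,B$ finite. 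Second, the coefficient $\tfrac{1}{2}$ in part $(1)$ is never actually extracted: ``amortize over all choices $t\in B^{-1}B$'' is a hope rather than an argument, and in Olson's proof this constant is the output of a genuinely different counting scheme, not a byproduct of transform failures. As written, the proposal is a plausible-sounding plan whose key invariant fails at the first nonabelian (indeed, even abelian) test, and repairing it would amount to finding the nonabelian substitute for the Dyson transform --- which is precisely the content of Olson's paper.
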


Similarly, work has been done to extend the Erd\H{o}s-Heilbronn Problem into groups. Starting with abelian groups, Gy. K{\'a}rolyi
proved the following result in  ~\cite{Gyula1, Gyula2}:

\begin{theorem} If $A$ is a nonempty subset of an abelian group $G$, then $|A \dot{+} A| \geq \min\{ p(G), 2|A|-3\}.$
\end{theorem}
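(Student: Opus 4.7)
The plan is to reduce the statement to the prime-cyclic case (Theorem~\ref{theorem:EHP}, Dias da Silva--Hamidoune) via the structure theorem for finitely generated abelian groups, combined with the Cauchy--Davenport theorem for finite groups (Theorem~\ref{Finite Group C-D}) and Kneser's theorem on the ordinary sumset.

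First I would dispatch the trivialities. For $|A|=1$ we have $A\dot{+}A=\emptyset$ and $2|A|-3=-1$, so the bound is vacuous; for $|A|=2$ with $A=\{a,b\}$, $A\dot{+}A=\{a+b\}$ matches $2|A|-3=1$. For the inductive step, note that $A\dot{+}A$ lies in the subgroup generated by a translate of $A$, so I may replace $G$ by a finitely generated abelian group without weakening the bound (since $p$ of a subgroup is at least $p(G)$). By the structure theorem this subgroup has the form $\Z^r\oplus T$ with $T$ finite; if the free rank is positive, projecting to a torsion-free factor (via a generic coordinate that remains injective on $A$) reduces to the classical case over $\Z$, where sorting $A=\{a_1<\cdots<a_k\}$ directly exhibits the chain $a_1+a_2<a_1+a_3<\cdots<a_1+a_k<a_2+a_k<\cdots<a_{k-1}+a_k$ of $2k-3$ distinct restricted sums.

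The core case is $G$ finite abelian. Let $p=p(G)$ be the smallest prime factor of $|G|$; by Cauchy's theorem for abelian groups, $G$ contains a subgroup $H\cong\Z/p\Z$. When $A$ lies in a single coset $a_0+H$, translating by $-a_0$ identifies $A$ with a subset of $\Z/p\Z$ and Theorem~\ref{theorem:EHP} gives $|A\dot{+}A|\geq\min\{p,2|A|-3\}=\min\{p(G),2|A|-3\}$ immediately. When $A$ meets at least two cosets of $H$, let $\pi\colon G\to G/H$ be the projection and $A_{\bar g}=A\cap\pi^{-1}(\bar g)$ be the fibers. I would proceed by induction on $|G|$, applying the inductive hypothesis to $\pi(A)\subseteq G/H$ (noting $p(G/H)\geq p(G)$) and combining it with per-fiber applications of Dias da Silva--Hamidoune. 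The key observation is that sums of elements drawn from distinct fibers remain in distinct cosets, so they contribute to $A\dot{+}A$ without any diagonal collision.

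The main obstacle will be the multi-coset case when $|\pi(A)|$ or the individual fiber sizes are too small for the inductive bound to beat the per-coset bound. There I expect to need Kneser's theorem applied to the ordinary sumset $A+A$: when $|A+A|$ is close to $2|A|-1$, the stabilizer of $A+A$ must be nontrivial, which forces $A$ to have a rigid union-of-cosets structure that can be analyzed explicitly. The final ``$-3$'' correction should mirror the Dias da Silva--Hamidoune phenomenon, arising from the two unavoidable coincidences forced by excluding diagonal sums $a+a$; the delicate point is verifying that no more than two such coincidences propagate across the coset decomposition, and this is where I expect the argument to require the most care.
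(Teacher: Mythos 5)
The paper does not prove this statement at all: it is quoted as a survey item, attributed to K\'{a}rolyi \cite{Gyula1, Gyula2}, so there is no in-paper argument to match your proposal against; the comparison must be with the cited proof, which is a delicate several-page induction. Measured against that, your proposal is a plan rather than a proof, and the gap sits exactly where the theorem is hard: the finite abelian case with $A$ meeting several cosets of the order-$p(G)$ subgroup $H$, in the critical regime $2|A|-3 < p(G)$ where neither the per-fiber Dias da Silva--Hamidoune bounds nor the cross-fiber Cauchy--Davenport bounds saturate. You defer this case yourself, and the two tools you name for it do not work as stated. First, Kneser's theorem is quoted backwards: it forces a nontrivial stabilizer when $|A+A| < 2|A|-1$, i.e.\ when the ordinary sumset is \emph{small}, not when it is ``close to $2|A|-1$.'' Second, the hoped-for principle that ``no more than two such coincidences propagate'' is not a lemma one can verify in general: if $A$ is a Sidon set, \emph{every} double $2a$ has only its diagonal representation, so restriction can delete up to $|A|$ elements of $A+A$; the theorem survives there only because $|A+A|$ is then of order $|A|^2$, a trade-off your sketch never quantifies. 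Moreover, the combination step over the quotient $G/H$ has no analogue of the sorted-chain argument you use over $\Z$ (there is no ordering on $G/H$ with which to assign large fiber pairs to distinct cosets), and making this rigorous is precisely the content of K\'{a}rolyi's critical-pair analysis. What you have actually proved is the single-coset case and the torsion-free case, which are the easy reductions.

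There is also a concrete error in your reduction for positive free rank. If the subgroup generated by (a translate of) $A$ is $\Z^r \oplus T$ with $T \ne 0$, then \emph{every} homomorphism to $\Z$ annihilates $T$, so no ``generic coordinate'' can be injective on $A$ once two elements of $A$ differ by a torsion element: for example $A = \{(0,0),(0,1),(1,0)\} \subseteq \Z \oplus \Z/5\Z$ admits no such projection. The standard repair is not to project to $\Z$ but to reduce the free coordinates modulo a prime $q$ chosen so large that the map $\Z^r \oplus T \to (\Z/q\Z)^r \oplus T$ is injective on $A \cup (A+A)$; this preserves the restricted sumset cardinality and satisfies $p\bigl((\Z/q\Z)^r \oplus T\bigr) \ge p(G)$, reducing to the finite case rather than to the integers. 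With that fix the skeleton of your reduction chain is sound, but the heart of the theorem --- the multi-coset bookkeeping that produces exactly the $-3$ --- remains unproved in your proposal.
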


He also extended the Erd\H{o}s-Heilbronn Problem to cyclic groups of prime powered order in 2005~\cite{Gyula Compact}. 

To state the result that extends the problem into finite, not necessarily abelian, groups, we introduce the following definition:

\begin{definition}\label{definition:tdot}
For a group $G$, let\/ $\Aut(G)$ be the group of automorphisms of\/~$G$.
Suppose $\theta\in\Aut(G)$ and\/ $A,B\subseteq G$. Write
\[
 A\tdot B:=\{a\cdot\theta(b)\mid a\in A,\,b\in B,\text{ and }a\ne b\,\}.
\]
\end{definition}

Given this definition, we can clearly state the Erd\H{o}s-Heilbronn theorem for finite groups which was proven in 2009 by P. Balister and J.P. Wheeler ~\cite{Balister/Wheeler}:

\begin{theorem}[Generalized Erd\H{o}s-Heilbronn Problem for Finite Groups]\label{theorem:GEH}
Let $G$ be a finite group, $\theta \in \Aut(G)$, and let $A,B
\subseteq G$ with $|A|$, $|B| > 0$.  Then $|A \tdot B| \geq \min
\{p(G)-\delta_{\theta}, |A|+|B|-3 \}$ where $A \tdot B := \{ a \cdot
\theta(b) \mid a \in A, b \in B,$ and $a \neq b \}$ and where \[
\delta_{\theta} = \left \{
\begin{array}{rll}
1 & \mbox{if} & \theta \mbox{ has even order in } \Aut(G),\\
0 & \mbox{if} & \theta \mbox{ has odd order in } \Aut(G).
\end{array}
\right.
\]
\end{theorem}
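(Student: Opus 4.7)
The plan is to proceed by strong induction on $|A|+|B|$, reducing the restricted sumset $A\tdot B$ to the unrestricted one (where Theorem~\ref{Finite Group C-D} applies) and then correcting for the ``diagonal'' products $a\cdot\theta(a)$ that can be lost. If $|A|+|B|\le 3$, the bound $|A|+|B|-3$ is non-positive and there is nothing to prove. If $A\cap B=\emptyset$, then $A\tdot B = A\cdot\theta(B)$; since $\theta\in\Aut(G)$ gives $|\theta(B)|=|B|$, Theorem~\ref{Finite Group C-D} applied to $A$ and $\theta(B)$ yields $|A\tdot B|\ge\min\{p(G),|A|+|B|-1\}$, which comfortably exceeds the required bound. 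Thus I may assume $A\cap B\ne\emptyset$ and $|A|+|B|\ge 4$.

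Next, fix $a_0\in A\cap B$ and set $A^*:=A\setminus\{a_0\}$, $B^*:=B\setminus\{a_0\}$. Any product $a\cdot\theta(b)$ in which exactly one of $a,b$ equals $a_0$ automatically has $a\ne b$, so
\[
A\tdot B \;\supseteq\; (A^*\tdot B^*)\,\cup\,\bigl(\{a_0\}\cdot\theta(B^*)\bigr)\,\cup\,\bigl(A^*\cdot\theta(\{a_0\})\bigr).
\]
By the inductive hypothesis $|A^*\tdot B^*|\ge\min\{p(G)-\delta_\theta,\,|A|+|B|-5\}$, and the two ``cross'' pieces are cosets of sizes $|B|-1$ and $|A|-1$ respectively. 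The aim is to show that these cross pieces contribute at least two new elements beyond $A^*\tdot B^*$, which is enough to upgrade the inductive bound to $|A|+|B|-3$. A careful inclusion-exclusion, combined with left-multiplication by $a_0^{-1}$ to normalize $a_0$ to the identity, reduces this to counting overlaps among $A^*$, $\theta(B^*)$, and $A^*\tdot B^*$ inside $G$.

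The main obstacle is the ``large'' case where $|A|+|B|-3\ge p(G)-\delta_\theta$ and one must instead establish $|A\tdot B|\ge p(G)-\delta_\theta$; this is where the parity of the order of $\theta$ enters. If the three pieces above fail to cover $p(G)$ distinct elements of $G$, a pigeonhole argument combined with Olson's theorem (stated in Section~3) forces $A$ and $B$ to be contained in few cosets of a nontrivial subgroup $H\le G$, severely restricting their structure. The loss of a single element when $\theta$ has even order should then be traced to the involutory iterate $\theta^{|\theta|/2}\in\Aut(G)$, which fixes a distinguished ``diagonal'' element admitting no off-diagonal representation, mirroring exactly how $\theta(x)=-x$ on $\ZZ/p\ZZ$ forces $0\notin A\tdot A$ whenever $A\cap A\ne\emptyset$. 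When $\theta$ has odd order, an orbit analysis on $\langle\theta\rangle$ acting on $G$ rules out such a fixed obstruction, recovering the full $p(G)$. Maintaining the induction through both branches of the minimum while making this parity-of-order analysis rigorous is the technical heart of the proof.
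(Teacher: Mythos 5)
There is a genuine gap at the single point where all the difficulty of the theorem lives. Your inductive step asserts, as an ``aim,'' that the two cross pieces $\{a_0\}\cdot\theta(B^*)$ and $A^*\cdot\theta(\{a_0\})$ contribute at least two elements beyond $A^*\tdot B^*$, but nothing in the proposal establishes this, and nothing forces it: both translates can lie entirely inside $A^*\tdot B^*$ precisely when $|A^*\tdot B^*|$ hovers at $|A|+|B|-5$ or $|A|+|B|-4$, which is the only range where the induction needs the gain. Ruling that out requires structural information about critical and near-critical pairs for restricted products in an arbitrary finite group --- that is, an inverse theorem of exactly the kind this paper formulates as an \emph{open} conjecture (Conjecture \ref{IEHconj}), so the argument is circular at its crux. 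A sanity check on scope makes the same point: specializing your scheme to $G=\Z/p\Z$ and $\theta=\id$ would yield an elementary deduction of the original Erd\H{o}s--Heilbronn conjecture from the Cauchy--Davenport theorem, a deduction that eluded exactly such removal-type attempts for thirty years until the exterior-algebra method of Dias da Silva--Hamidoune \cite{Dias da Silva} and the polynomial method of Alon--Nathanson--Ruzsa \cite{Alon}; restricted sumsets do not admit the transform and compression tools that make removal inductions work for ordinary sumsets. The two auxiliary steps are also unsupported: Olson's theorem produces a subgroup $H$ and a subset $S\subseteq A\cdot B$ with $H\cdot S=S$ or $S\cdot H=S$, which does not force $A$ and $B$ into few cosets of $H$; and the parity mechanism is only a heuristic, since $\delta_\theta=1$ must be proved for \emph{every} even-order $\theta$, while the involutory iterate $\theta^{|\theta|/2}$ does not directly control products of the form $a\cdot\theta(b)$.

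For comparison: the paper itself states this theorem without proof, citing \cite{Balister/Wheeler}, and the actual argument there is structurally different from yours. It is a two-stage proof: the base case (essentially cyclic groups of prime order, with $\theta$ acting as $x\mapsto cx$) is handled by the polynomial method, and it is exactly the case $c=-1$ --- an automorphism of even order --- where one element is unavoidably lost, producing $\delta_\theta$; the general case then proceeds by induction on $|G|$ through $\theta$-invariant normal subgroups and quotients, using the Feit--Thompson theorem \cite{FT} (when $p(G)>2$ the order of $G$ is odd, hence $G$ is solvable) to guarantee the needed normal structure. So a correct proof runs through algebraic machinery at the base case and group-extension inductions above it, not through an elementary induction on $|A|+|B|$; if you want to salvage your write-up, expect to import those tools rather than repair the removal step.
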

We note that Lev \cite{Lev} has shown that Theorem \ref{theorem:GEH} does not hold in general for an arbitrary bijection $\theta$.

\section{Inverse Problems}

The previous problems were concerned with properties of the sumset
given some knowledge of the individual sets making up the sumset.
This leads one to consider questions in the other direction. In
particular, if we know something about the sumset, does this give us
any information about the individual sets making up the sumset?
Again we borrow the language of \cite{Nath} and refer to these
problems as {\em inverse problems}\index{inverse problem}.  In each
case, the inverse of the previously stated problems yields beautiful
results.

\subsection{Inverse Problems for the
Cauchy-Davenport Theorem}

The first result is the inverse to the Cauchy-Davenport theorem in $\ZZ / p \ZZ$ due to A.G. Vosper \cite{Vosper}:

\begin{theorem}[Vosper's Inverse Theorem]\index{Vosper's Theorem}\ \\
Let $A$ and $B$ be finite nonempty subsets of $\ZZ / p \ZZ$. Then $|A+B| = |A| + |B| -1$ if and only if one of the following cases holds: 
\begin{itemize}
\item[] $(i)$ $|A| =1$ or $|B| = 1$;
\item[] $(ii)$ $A + B = \ZZ / p \ZZ$;
\item[] $(iii)$ $|A+B| = p-1$ and $B$ is the complement of the set $c - A$ in $\ZZ / p \ZZ$ where $\{ c\} = \left( \ZZ / p \ZZ \right)\setminus  \left( A+B \right)$;
\item[] $(iv)$ A and B are arithmetic progressions of the same common difference.
\end{itemize}
\end{theorem}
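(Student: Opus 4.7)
The plan is to split the proof into an easy direction (verifying each of (i)--(iv) gives $|A+B|=|A|+|B|-1$) and a hard direction (showing the equality forces one of the four cases).

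For the easy direction, each case is a short check. Case (i) is immediate since adding a singleton is translation. In case (ii), the hypotheses $A+B=\ZZ/p\ZZ$ and $|A+B|=|A|+|B|-1$ together just say $|A|+|B|=p+1$, which is a consistency condition. In case (iii), the complement structure gives $|B|=p-|A|$, so $|A|+|B|-1=p-1=|A+B|$. In case (iv), if $A=\{a,a+d,\ldots,a+(k-1)d\}$ and $B=\{b,b+d,\ldots,b+(m-1)d\}$ with $k+m-1\le p$, then $A+B=\{a+b,a+b+d,\ldots,a+b+(k+m-2)d\}$ has size $k+m-1=|A|+|B|-1$.

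For the hard direction, suppose $|A+B|=|A|+|B|-1$. If $|A|=1$ or $|B|=1$ we are in case (i); if $A+B=\ZZ/p\ZZ$ we are in case (ii). So assume $|A|,|B|\ge 2$ and $|A+B|\le p-1$. The subcase $|A+B|=p-1$ yields (iii) at once: letting $\{c\}=(\ZZ/p\ZZ)\setminus(A+B)$, the sets $c-A$ and $B$ are disjoint (else $c\in A+B$), and since $|c-A|+|B|=|A|+(p-|A|)=p$ they must partition $\ZZ/p\ZZ$. The main remaining case is $|A|,|B|\ge 2$ with $|A+B|\le p-2$, and we must deduce (iv). The plan here is induction on $\min\{|A|,|B|\}$ using the Dyson $e$-transform: for $e\in\ZZ/p\ZZ$, set $A^{(e)}:=A\cup(B+e)$ and $B^{(e)}:=B\cap(A-e)$, so that $A^{(e)}+B^{(e)}\subseteq A+B$ and $|A^{(e)}|+|B^{(e)}|=|A|+|B|$. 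Combined with Theorem~\ref{theorem:cauchy-davenport}, whenever the transform is nontrivial these inclusions force equality $|A^{(e)}+B^{(e)}|=|A^{(e)}|+|B^{(e)}|-1$ to persist. The base case $|B|=2$ is direct: writing $B=\{b_1,b_2\}$ and $d=b_2-b_1$, inclusion-exclusion gives $|A+B|=2|A|-|A\cap(A+d)|=|A|+1$, so $|A\cap(A+d)|=|A|-1$; viewing $A$ as a subset of the Cayley cycle on $\ZZ/p\ZZ$ whose edges join $x$ to $x+d$, this says $A$ is a single arc, i.e.,\ an AP of common difference $d$. For $|B|\ge 3$ one chooses $e$ so that $B^{(e)}$ is a proper nonempty subset of $B$, applies the inductive hypothesis to $(A^{(e)},B^{(e)})$ to obtain a common difference $d$, and then shows that $A$ and $B$ themselves are APs with common difference $d$.

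The main obstacle is the inductive step: one must verify that $e$ can be chosen so that $(A^{(e)},B^{(e)})$ remains non-degenerate (with $|A^{(e)}|,|B^{(e)}|\ge 2$ and $|A^{(e)}+B^{(e)}|\le p-2$), and then carefully lift the AP structure from $(A^{(e)},B^{(e)})$ back to $(A,B)$ using $A\subseteq A^{(e)}$ and $B^{(e)}\subseteq B$ together with the precise action of the transform. A slicker alternative route is to apply Kneser's theorem: since $\ZZ/p\ZZ$ has no proper nontrivial subgroups, the stabilizer of $A+B$ is trivial, and a Freiman-type $3k-4$ structural theorem in $\ZZ/p\ZZ$ then delivers the AP conclusion directly without the delicate transform bookkeeping.
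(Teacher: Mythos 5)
This theorem is quoted in the paper as a classical result with a citation to Vosper; the paper contains no proof of it, so your proposal can only be measured against the standard proofs in the literature. Your easy direction is fine (sensibly reading the side conditions into cases $(ii)$ and $(iv)$, which as stated need $|A|+|B|=p+1$ and $|A|+|B|-1\le p$ respectively), your reduction of the subcase $|A+B|=p-1$ to $(iii)$ is correct, and your base case $|B|=2$ via $|A\cap(A+d)|=|A|-1$ and the single-arc argument on the $d$-cycle is exactly right. The persistence of criticality under the Dyson transform, via $A^{(e)}+B^{(e)}\subseteq A+B$, $|A^{(e)}|+|B^{(e)}|=|A|+|B|$, and Theorem~\ref{theorem:cauchy-davenport}, is also correctly argued.

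The genuine gap is the inductive step, which is where the entire content of Vosper's theorem lives, and your sketch supplies neither of its two missing pieces. First, non-degeneracy: it can happen that every choice of $e$ with $B^{(e)}\ne B$ collapses $B^{(e)}$ to a singleton, so you cannot always run the induction; this degenerate configuration is precisely where the classical proofs need a separate mechanism, namely the dual critical pair built from the complement $D=(\ZZ/p\ZZ)\setminus(A+B)$ (one shows $A$ is the complement of $D-B$ and that $(D,-B)$ is again critical), together with several auxiliary lemmas. Second, and more seriously, the lifting step is asserted rather than proved: knowing that $A^{(e)}=A\cup(B+e)$ is an arithmetic progression with difference $d$ tells you only that $A$ is a \emph{subset} of an AP, which is far from $A$ being an AP itself; extracting the structure of $A$ and $B$ from that of $(A^{(e)},B^{(e)})$ requires a real argument using $A^{(e)}+B^{(e)}=A+B$, and nothing in your outline indicates how it goes. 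Your fallback route does not rescue this: in $\ZZ/p\ZZ$ Kneser's theorem degenerates to Cauchy--Davenport (as you note, there are no proper nontrivial subgroups, so it yields no structural information), and a Freiman-type $3k-4$ theorem modulo $p$ is strictly deeper than Vosper's theorem --- the known proofs of such results either invoke Vosper or use rectification under strong cardinality hypotheses such as $|A|\le cp$ --- so appealing to it is circular in this context. As it stands, the proposal is a correct skeleton of the classical transform proof with its hardest two steps left open.
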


\begin{remark}\rm
We note that when $|A+B| = |A| + |B| - 1$, we label sets $A$ and $B$ as a \textit{critical pair}.
\end{remark} 
In $1960$, J.H.B. Kemperman~\cite{Kemperman} extended a weaker
version of Vosper's Theorem to abelian groups.  Namely,

\begin{theorem}[Kemperman]\ \\
Let $A$ be a nonempty subset of an abelian group.  Let $p(G)$ be as
in Definition \ref{definition:minimal torsion element}.  Suppose
$p(G) > 2|A|- 1$.  Then $|A+A| = 2|A|- 1$ if and only if $A$ is an
arithmetic progression.
\end{theorem}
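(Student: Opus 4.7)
If $A = \{a_0, a_0 + d, \ldots, a_0 + (n-1)d\}$ with $n = |A|$ and common difference $d$, then $A + A = \{2a_0 + kd : 0 \leq k \leq 2n-2\}$. The $2n-1$ listed elements are distinct because the hypothesis $p(G) > 2|A|-1$ forces every multiple $kd$ with $1 \leq k \leq 2n - 2$ to be nonzero (the case $n = 1$ is trivial, and $d = 0$ is excluded for $n \geq 2$ since $A$ has $n$ distinct elements).

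\textbf{Converse direction.} After translating so that $0 \in A$ and passing to the finitely generated subgroup $\langle A \rangle \leq G$, the plan is to exhibit a Freiman $2$-homomorphism $\phi : \langle A \rangle \to \Z$, namely a group homomorphism whose restriction to $A$ is injective and satisfies $|\phi(A) + \phi(A)| = |A + A|$. Given such a $\phi$, the image $\phi(A) = \{x_1 < \cdots < x_n\} \subset \Z$ has $|\phi(A) + \phi(A)| = 2n - 1$, and the classical integer case then forces $\phi(A)$ to be an arithmetic progression: the strictly increasing sums $x_1 + x_1 < x_1 + x_2 < \cdots < x_1 + x_n < x_2 + x_n < \cdots < x_n + x_n$ account for all $2n - 1$ elements of the sumset, which pins down $x_{i+1} - x_i = x_2 - x_1$ for each $i$. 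The progression structure then pulls back through $\phi$ to $A$. To construct $\phi$, I would decompose $\langle A \rangle \cong \Z^r \oplus T$ where $T$ is the (finite) torsion subgroup satisfying $p(T) \geq p(G) > 2|A| - 1$, project to the free part $\Z^r$, and post-compose with a sufficiently generic linear functional $\Z^r \to \Z$ that avoids the finitely many hyperplanes passing through nonzero elements of $A + A - A - A$.

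\textbf{Main obstacle.} The real difficulty lies in constructing $\phi$ when both $r \geq 1$ and $T$ is nontrivial; the projection $\Z^r \oplus T \to \Z^r$ kills torsion, so two elements of $A$ differing by a nonzero torsion element would collapse. The hypothesis $p(G) > 2|A| - 1$ must be used to preclude this: if $a + b - c - d$ were a nonzero element of $T$, its order would exceed $2|A| - 1$, yet the sumset $A + A$ has only $2|A| - 1$ elements, which severely restricts such ``torsion coincidences.'' Turning this heuristic into a rigorous argument, together with verifying that a generic linear functional on $\Z^r$ then avoids all residual obstructions, constitutes the technical heart of the proof. When $r = 0$, so $\langle A \rangle$ is finite, this strategy is replaced by a reduction to a cyclic group of prime-power order whose underlying prime exceeds $2|A| - 1$, where an integer-like ordering argument (or an appeal to Vosper's theorem once the prime-order case is isolated) concludes. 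As a fallback, one could instead proceed by induction on $|A|$, seeking an element $a \in A$ whose removal reduces the doubling by exactly two and then showing $a$ must extend the resulting progression; the challenge there is guaranteeing the existence of such a removable element, which again hinges on the large-$p(G)$ hypothesis.
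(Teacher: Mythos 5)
A preliminary remark: the paper itself offers no proof of this theorem---it is quoted from Kemperman \cite{Kemperman} as part of the survey---so your attempt can only be judged on its own merits. Your forward direction is correct and complete: for $d \neq 0$, the minimality in the definition of $p(G)$ gives $(k-k')d \neq 0$ whenever $1 \leq k - k' \leq 2n-2 < p(G)$, so the $2n-1$ sums $2a_0 + kd$ are indeed distinct.

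The converse, however, is a program rather than a proof, and the gap sits exactly where you yourself place the ``technical heart.'' First, the construction of $\phi$: projecting $\langle A \rangle \cong \Z^r \oplus T \to \Z^r$ and post-composing with a generic functional handles only the torsion-free case, and your heuristic that a nonzero torsion element of $A+A-A-A$ would have order exceeding $2|A|-1$, ``which severely restricts torsion coincidences,'' restricts nothing in the relevant regime: in $G = \Z/q\Z$ with $q$ prime and $q > 2|A|-1$, \emph{every} element of $A+A-A-A$ is torsion of large order, yet the theorem holds there (this is Vosper's setting), so large order of differences is perfectly consistent with coincidences and is not by itself a usable hypothesis. Second, the finite case: the proposed reduction of a finite $\langle A \rangle$ to ``a cyclic group of prime-power order'' is false in general---a finite abelian group with $p(G) > 2|A|-1$ decomposes into several cyclic prime-power factors and $A$ need not embed in one---and even granting it, Vosper's theorem \cite{Vosper} applies only to $\Z/p\Z$ with $p$ prime, not to prime powers, so the promised ``integer-like ordering argument'' is once more the unproved heart. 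A route you circle but never land on: by Kneser's theorem \cite{Kneser}, the stabilizer $H$ of $A+A$ must be trivial, since a nontrivial $H$ would give $|A+A| \geq |H| \geq p(G) > 2|A|-1$; one is then in the equality case of Kneser with trivial stabilizer, and it is precisely Kemperman's critical-pair analysis (or, under $p(G) > 2|A|-1$, a rectification principle making your Freiman-isomorphism idea rigorous) that converts this into the progression conclusion---your sketch assumes that conversion rather than performing it. The inductive fallback has the same status: the existence of an element whose removal drops the doubling by exactly two is itself the claim needing proof.
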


To generalize the inverse problem in groups, we introduce the notion of an arithmetic progression in a group:

\begin{definition}[Group Arithmetic Progression]\index{group
arithmetic progression}\ \\
Let $G$ be a group and $A \subseteq G$ with $|A|=k$.  Then $A$ is a
{\em group arithmetic progression} if there exists both $g$ and $h$ in
$G$ such that $A = \{ g + ih \mid 1 \leq i < k \}$.
\end{definition}

In the above definition, we say that $A$ is a $k$-term group
arithmetic progression with common difference $h$. If the group $G$ is nonabelian, we utilize multiplicative notation and form the intuitive definitions of left and right arithmetic progressions. 

Y.O. Hamidoune further extended this idea to finitely generated
groups \cite{Hamidoune}.

\begin{theorem}[Hamidoune]\ \\
Let $G$ be a (not necessarily abelian) group generated by a finite
subset $S$ where $0 \in S$.  Then either
\begin{enumerate}
\item for every subset $T$ such that $2 \leq |T| < \infty$, we have
$|S+T| \geq \min \{|G|-1, |S|+|T| \}$ or
\item $S$ is an arithmetic progression.
\end{enumerate}
\end{theorem}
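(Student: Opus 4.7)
My plan is to reframe the theorem via the outer vertex-boundary in the Cayley graph $\Gamma=\mathrm{Cay}(G,S)$ and then apply Hamidoune's isoperimetric/atom method. Since $0\in S$, for any finite $T\subseteq G$ we have $T\subseteq S\cdot T$, so $|S\cdot T|=|T|+|\partial_S T|$ where $\partial_S T:=(S\cdot T)\setminus T$. The failure of case~(1) for some $T$ with $|T|\geq 2$ is then equivalent to the existence of a finite $T$ with $|T|\geq 2$, $|G\setminus S\cdot T|\geq 2$, and $|\partial_S T|\leq |S|-1$. The entire task is to show that under this failure hypothesis, $S$ must be an arithmetic progression.

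Assuming this failure hypothesis, I would introduce the $2$-isoperimetric number
$$\kappa_2(S):=\min\bigl\{\,|\partial_S F| : F\subseteq G,\ |F|\geq 2,\ |G\setminus S\cdot F|\geq 2\,\bigr\}\le |S|-1,$$
and call a minimizer of smallest cardinality a \emph{$2$-atom}. The critical structural input is an intersection lemma: if $A$ and $A'$ are distinct $2$-atoms, then $|A\cap A'|\leq 1$, proved via the submodular inequality $|\partial_S(F\cap F')|+|\partial_S(F\cup F')|\leq |\partial_S F|+|\partial_S F'|$ together with minimality of $|A|$. Armed with this, fix a $2$-atom $A$ and analyze the right stabilizer $H:=\{g\in G : S\cdot A\cdot g=S\cdot A\}$. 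If $H$ is nontrivial, then $S\cdot A$ is a union of $H$-cosets; combining with $\langle S\rangle=G$ and $0\in S$ forces a contradiction with $|G\setminus S\cdot A|\geq 2$ except in degenerate configurations that themselves make $S$ a progression. If $H$ is trivial, then translating $A$ by elements of $S$ and invoking the intersection lemma forces successive $S$-translates of $A$ to overlap in a rigid single-element pattern that is only possible when $A$ --- and, by propagation using $\langle S\rangle=G$, $S$ itself --- is a (left or right) arithmetic progression.

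The step I expect to be the main obstacle is the passage from the arithmetic-progression structure at the level of an atom $A$ to AP structure of $S$, particularly in the nonabelian setting where left and right progressions differ and common differences need not commute with translates. One must verify both that the common difference detected at $A$ is genuinely realized by $S$, and that finite-generation of $G$ together with $0\in S$ rules out pathological infinite configurations when $|G|=\infty$. A careful case split for $|S|\leq 2$ (trivially an AP) and for small-boundary degenerate atoms should close out the argument, but the clean nonabelian bootstrap from atom-AP to $S$-AP is the technically delicate step.
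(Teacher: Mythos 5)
First, a point of comparison: this survey does not prove Hamidoune's theorem at all --- it is quoted as background, with the proof residing in the cited paper \cite{Hamidoune}. So the only meaningful benchmark is Hamidoune's original isoperimetric argument, and your sketch is essentially a reconstruction of that method: your reduction of the failure of case (1) to the existence of a finite $T$ with $|T|\ge 2$, $|G\setminus S\cdot T|\ge 2$ and $|\partial_S T|\le |S|-1$ is correct, and $\kappa_2$ and $2$-atoms are exactly the right objects.

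However, two load-bearing steps are asserted rather than proved, and the first does not hold as stated. The submodular inequality alone does not yield $|A\cap A'|\le 1$ for distinct $2$-atoms: to play $F\cap F'$ and $F\cup F'$ against minimality you must know that both still satisfy the side constraints, and for $2$-atoms the union can destroy the condition $|G\setminus S\cdot(F\cup F')|\ge 2$ (this is precisely why $1$-atoms are easy --- distinct ones are disjoint --- while $2$-atoms are not). Hamidoune circumvents this with a duality between $S$ and $S^{-1}$ and separate treatment of the finite and infinite cases, and the structural conclusion he extracts is sharper than your ``rigid single-element overlap'' picture: a $2$-atom containing $0$ is either a subgroup or has cardinality exactly $2$. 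Second, the step you flag as the main obstacle --- bootstrapping AP structure from the atom to $S$ --- is in fact the easy part once that dichotomy is available, so your stabilizer/overlap analysis is solving the wrong difficulty. If the $2$-atom is $A=\{0,d\}$, then $|S+A|=|A|+|\partial_S A|\le 2+(|S|-1)=|S|+1$, i.e. $|(S+d)\setminus S|\le 1$, and with $S$ finite and generating this directly forces $S$ to be an arithmetic progression with difference $d$; if instead the atom is a subgroup $H$, then $S+H$ is a union of cosets of $H$, and $\langle S\rangle=G$ together with $|G\setminus(S+H)|\ge 2$ produces the contradiction. As written, your proposal identifies the right machinery but leaves a genuine gap exactly where the intersection/dichotomy theorem for $2$-atoms must be established.
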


Finally, Gy. K{\'a}rolyi obtained the inverse to the Cauchy-Davenport theorem in finite groups \cite{Gyula3}:

\begin{theorem}
Let $A,B$ be subsets of a finite group $G$ such that $|A| = k$, $|B|=l$, and $k+l -1 \leq p(G) - 1.$ Then $|AB| = k +l - 1$ where $AB = \{ ab | \hspace{0.2cm} a \in A, b \in B\}$ if and only if one of the following conditions holds:
\begin{itemize}
\item[] $(i)$ $k=1$ or $l=1$;
\item[] $(ii)$ there exists $a,b,q \in G$ such that 
$$ A = \{a, aq, aq^{2}, \dots, aq^{k-1}\} \text{    and    }  B = \{ b, qb, q^{2}b, \dots, q^{l-1}b\};$$
\item[] $(iii)$ $k+l-1 = p(G)-1$ and there exists a subgroup $F$ of $G$ of order $p(G)$ and elements $u, v \in G, z \in F$ such that 
$$ A \subset uF, B \subset Fv \text{   and   } A = u(F\setminus zvB^{-1}).$$
\end{itemize}
\end{theorem}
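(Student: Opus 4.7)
The $\Leftarrow$ direction is a direct verification in each of the three cases. In (i), $|AB|$ equals the size of the larger set, which is $k+l-1$. In (ii), writing $A = a\{1,q,\ldots,q^{k-1}\}$ and $B = \{1,q,\ldots,q^{l-1}\}b$ yields $AB = a\{1,q,\ldots,q^{k+l-2}\}b$; the hypothesis $k+l-1 \le p(G)-1$ forces $q$ (if nontrivial) to have order at least $k+l$, so the $k+l-1$ powers $q^0,\ldots,q^{k+l-2}$ are distinct. In (iii), $A \subset uF$ and $B \subset Fv$ give $AB \subset uFv$, a set of size $|F|=p(G)$; the element $uzv$ cannot equal any product $ab$, since $a = u(zvb^{-1}) \in u(zvB^{-1})$ is explicitly excluded by $A = u(F\setminus zvB^{-1})$. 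Hence $|AB| \le p(G)-1 = k+l-1$, and the matching lower bound comes from the Cauchy-Davenport theorem for finite groups.

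For the $\Rightarrow$ direction, assume $|AB| = k+l-1 \le p(G)-1$ with $k,l \ge 2$ (else case (i) applies). Applying Olson's theorem to $(A,B)$ yields a subgroup $H \le G$ and a subset $S \subseteq AB$ with $|S| \ge k+l-|H|$ and either $HS = S$ or $SH = S$. Since every nontrivial subgroup of $G$ has order at least $p(G) \ge k+l$, the only possibilities are $|H|=1$ or $|H| \ge p(G)$. If $|H| \ge p(G)$, the coset invariance of $S$ combined with the tightness $|AB|=k+l-1$ pins $|H| = p(G)$ exactly, constrains $A$ to a single left coset $uH$ and $B$ to a single right coset $Hv$, and forces $AB$ to fill the entire double coset $uHv$ except for one element. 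Writing that missing element as $uzv$ with $z \in H$ and setting $F := H$ reconstructs case (iii).

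The residual case $|H|=1$ is where Olson's theorem yields no structural information, and it is the main obstacle of the argument. Here the plan is to reduce to the abelian setting by showing that $\langle A \cup B \rangle$ must be abelian: the saturation of the Cauchy-Davenport bound, combined with repeated left- and right-translation arguments of Vosper flavor, should force all commutators $[a^{-1}a',\, b^{-1}b']$ for $a,a' \in A$ and $b,b' \in B$ to be trivial. Once inside an abelian subgroup, Kemperman's inverse theorem (and its $A\ne B$ extension) forces $A$ and $B$ to be arithmetic progressions with a common difference $q$, landing in case (ii). The genuine difficulty is that Olson's theorem alone cannot distinguish a compatible pair of arithmetic progressions from an abstract trivial-stabilizer critical pair in a non-abelian group; ruling out non-commutation appears to require exploiting the gap $k+l \le p(G)$ to show that any non-commuting generators of $\langle A\cup B\rangle$ would inject an extra element into $AB$, contradicting saturation.
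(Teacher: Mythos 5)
You should first note that the paper does not prove this statement at all: it is K\'arolyi's inverse Cauchy--Davenport theorem for finite groups, quoted as a survey result from \cite{Gyula3}, where the actual proof proceeds by an induction through group extensions (reducing along a composition series to a subgroup of order $p(G)$, where Vosper's theorem \cite{Vosper} for $\Z/p\Z$ is applied; the companion direct result even leans on the Feit--Thompson theorem \cite{FT}). Your $\Leftarrow$ direction is correct and routine, including the observation that $uzv\notin AB$ in case $(iii)$ and the matching lower bound from the Cauchy--Davenport theorem for finite groups. The $\Rightarrow$ direction, however, has a concrete flaw and an admitted hole.

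The flaw: Olson's theorem \cite{Olson} is vacuous under the hypotheses, so your branch reconstructing case $(iii)$ cannot get off the ground. If the subgroup $H$ it produces is nontrivial, then $|H|\geq p(G)\geq k+l$, while $|AB|=k+l-1<p(G)$; a nonempty $S\subseteq AB$ with $HS=S$ or $SH=S$ is a union of cosets of $H$, forcing $|S|\geq |H|>|AB|\geq |S|$, a contradiction. Hence $S=\emptyset$, and the inequality $|S|\geq k+l-|H|$ degenerates to $|H|\geq k+l$, which carries no structural information whatsoever about $A$ and $B$ --- in particular nothing ``pins $|H|=p(G)$'' or places $A$ in a single coset $uH$. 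Case $(iii)$ genuinely arises from the exceptional case of Vosper's theorem transplanted into a coset of a subgroup $F$ of order $p(G)$, and manufacturing that subgroup (and translates of $A$, $B$ inside its cosets) is precisely the hard content of \cite{Gyula3}; it does not follow from Olson. Second, your residual trivial-stabilizer case is a plan rather than a proof: ``should force'' and ``appears to require'' concede exactly the main difficulty, namely excluding non-abelian critical pairs other than the compatible progressions of case $(ii)$. Even granting the reduction to an abelian subgroup, invoking ``Kemperman's inverse theorem (and its $A\ne B$ extension)'' understates the work: Kemperman's full critical-pair theory \cite{Kemperman} yields several structural alternatives (quasi-periodic decompositions among them), and one must use $k+l\leq p(G)$ to eliminate the periodic cases before arithmetic progressions emerge. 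As it stands the forward implication is unproven.
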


\subsection{Inverse Problems Related to the Erd\H{o}s-\\
Heilbronn Problem}

We as well have inverse problems for the Erd\H{o}s-Heilbronn
Problem.  More specifically, we have inverse results for the Dias da Silva-Hamidoune Theorem. In particular, Gy. K{\'a}rolyi established the following in
\cite{Gyula Compact}:

\begin{theorem}[Inverse Theorem of the Dias da Silva-Hamidoune Theorem]\label{invDH}
Let $A$ be a subset of $\ZZ / p \ZZ$ where $p$ is a prime.  Further
suppose $|A| \geq 5$ and $p > 2|A| -3$.  Then $|A \dot{+} A| =
2|A|-3$ if and only if $A$ is an arithmetic progression.
\end{theorem}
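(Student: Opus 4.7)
The plan is to split into the two directions. The forward direction is a direct verification: if $A = \{a + id : 0 \le i \le k-1\}$ with $k = |A|$ and nonzero common difference $d$, then $A \dot{+} A = \{2a + jd : 1 \le j \le 2k-3\}$, and the hypothesis $p > 2k - 3$ ensures these $2k-3$ residues are pairwise distinct modulo $p$, so $|A \dot{+} A| = 2|A| - 3$.

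For the reverse direction the plan is to reduce to Vosper's inverse Cauchy-Davenport theorem, which was stated earlier in this section. The bridge I will use is the decomposition $A + A = (A \dot{+} A) \cup 2A$, where $2A := \{2a : a \in A\}$, together with the set $T := \{a \in A : 2a \notin A \dot{+} A\}$. This gives $|A + A| = |A \dot{+} A| + |T| = 2|A| - 3 + |T|$. The Cauchy-Davenport bound combined with $p > 2|A| - 3$ and the fact that $p$ is an odd prime (so $p \ge 2|A| - 1$) yields $|A+A| \ge 2|A| - 1$, hence $|T| \ge 2$. The central task is to prove the matching bound $|T| \le 2$, which will force $|A + A| = 2|A| - 1$ exactly.

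Showing $|T| \le 2$ is the main obstacle. The condition $a \in T$ is equivalent to $(2a - A) \cap A = \{a\}$; equivalently, $a$ is the midpoint of no nontrivial pair from $A$, so I need to show at most two elements of $A$ have this property. My plan to rule out $|T| \ge 3$ is by contradiction: for three distinct $a_1, a_2, a_3 \in T$, I will apply the Dias da Silva--Hamidoune theorem to each $A_i := A \setminus \{a_i\}$ to obtain $|A_i \dot{+} A_i| \ge 2|A| - 5$. Since $A_i \dot{+} A_i \subseteq A \dot{+} A$ with $|A \dot{+} A| = 2|A| - 3$, at most two of the $|A| - 1$ sums $a_i + b$ (with $b \in A \setminus \{a_i\}$) can have a unique representation in $A \dot{+} A$. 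Combining this heavy-repetition structure across all three $a_i$ with the no-midpoint conditions $(2a_i - A) \cap A = \{a_i\}$ should produce a counting contradiction for $|A| \ge 5$, and this delicate step is where I expect the bulk of the work to lie.

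Once $|A + A| = 2|A| - 1$ is established, Vosper's inverse theorem applied to the pair $(A, A)$ leaves only the desired conclusion: case (i) is excluded by $|A| \ge 5$; case (iii) would force $p = 2|A|$, impossible since $p$ is an odd prime; case (iv) is exactly the statement that $A$ is an arithmetic progression. The remaining case (ii), $A + A = \ZZ/p\ZZ$, can occur only at the boundary $p = 2|A| - 1$; in that narrow regime $A \dot{+} A$ omits exactly two elements of $\ZZ/p\ZZ$, and I expect a short separate argument exploiting this rigidity together with $|T| = 2$ to still force $A$ to be an AP.
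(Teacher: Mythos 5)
The paper itself contains no proof of this theorem: it is quoted as a known result of K\'arolyi \cite{Gyula Compact}, whose proof proceeds via the polynomial method of Alon--Nathanson--Ruzsa together with a compactness argument, and the paper explicitly remarks (after Theorem 5.4) that inverse results of this unconditional type rely on the polynomial method, its own elementary techniques succeeding only under the a priori assumption that the sets are arithmetic progressions (Theorem 5.1). So your proposal must stand on its own, and it does not: it has a genuine gap at its central step. Your forward direction is complete and correct, and your framework for the converse is sound as far as it goes --- the decomposition $A+A=(A\dot{+}A)\cup\{2a\mid a\in A\}$, the injectivity of doubling modulo an odd prime giving $|A+A|=2|A|-3+|T|$, and the parity observation $p\geq 2|A|-1$ combining with Cauchy--Davenport to force $|T|\geq 2$ are all valid. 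But the matching bound $|T|\leq 2$, which is what carries all the difficulty, is never established. The deletion argument you sketch only yields that for each $a_i\in T$ at most two elements of $A\dot{+}A$ lose all representations when $a_i$ is removed (via Dias da Silva--Hamidoune \cite{Dias da Silva} applied to $A\setminus\{a_i\}$); you then assert that combining this across three elements of $T$ with the midpoint-free conditions $(2a_i-A)\cap A=\{a_i\}$ ``should produce a counting contradiction,'' and defer that derivation entirely. No contradiction is exhibited, and it is far from clear that one follows from only the constraints you have listed: extracting arithmetic-progression rigidity from the single equation $|A\dot{+}A|=2|A|-3$ is precisely the hard content of the theorem, which is why K\'arolyi needed the polynomial method. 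Declaring the crux to be ``where the bulk of the work lies'' does not discharge it.

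The second unproven step is the boundary case $p=2|A|-1$, where Vosper's case $(ii)$, $A+A=\ZZ/p\ZZ$, genuinely can occur and Vosper's theorem then yields no structural information whatsoever; your treatment is again only the expectation of ``a short separate argument'' exploiting the fact that $A\dot{+}A$ omits exactly two residues. Since the theorem's hypothesis $p>2|A|-3$ permits $p=2|A|-1$, this case cannot be waved off. Until both the bound $|T|\leq 2$ and the boundary case are actually proven, the reverse direction --- which is the entire substance of the theorem --- remains open in your write-up.
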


He also extended this result to the following:
\begin{theorem} \label{K2005}\ \\
Let $A$ be a subset of an abelian group $G$ where $p(G)$ is as in
Definition \ref{definition:minimal torsion element} prime. Further
suppose $|A| \geq 5$ and $p(G) > 2|A| -3$.  Then $|A \dot{+} A| =
2|A|-3$ if and only if $A$ is a group arithmetic progression.
\end{theorem}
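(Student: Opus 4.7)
For the reverse direction, a direct computation suffices. If $A = \{g + ih : 0 \leq i \leq k-1\}$ is a $k$-term group arithmetic progression with $k = |A| \geq 5$, then
\[
A \dot{+} A = \{2g + jh : 1 \leq j \leq 2k-3\}.
\]
Since $h \neq 0$ (else $|A|=1$) and $p(G) > 2k-3$, the element $h$ has order at least $p(G)$ (or infinite order), so these $2k-3$ listed elements are distinct and $|A \dot{+} A| = 2|A|-3$.

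The forward direction is the substantive half. The plan is to reduce to Theorem~\ref{invDH} via a Freiman 2-isomorphism into $\Z/q\Z$ for a suitable prime $q$.

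\emph{Step 1 (Setup).} Translate so that $0 \in A$, and replace $G$ by the subgroup $H = \langle A \rangle$; note $p(H) \geq p(G) > 2|A|-3$ since every nontrivial torsion element of $H$ is one of $G$. As $H$ is finitely generated abelian, $H \cong \Z^r \oplus T$ with $T$ finite and every prime dividing $|T|$ exceeding $2|A|-3$.

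\emph{Step 2 (Freiman 2-isomorphism).} Construct a Freiman 2-isomorphism $\psi : A \to A' \subseteq \Z/q\Z$ for some prime $q > 2|A|-3$, i.e., a bijection with the property that $a+b = c+d$ in $H$ if and only if $\psi(a)+\psi(b)=\psi(c)+\psi(d)$ in $\Z/q\Z$, for all quadruples $a,b,c,d \in A$. The standard construction separates the finitely many sums $a+b$ for $a,b \in A$ by a generic integer-valued linear functional on the free factor $\Z^r$, is combined with a compatible character on the torsion factor $T$, and finally reduces modulo a sufficiently large prime $q$. Since $\psi$ preserves restricted sumsets, $|A' \dot{+} A'| = |A \dot{+} A| = 2|A'|-3$.

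\emph{Step 3 (Apply Theorem~\ref{invDH} and pull back).} With $|A'| = |A| \geq 5$, $q > 2|A'|-3$, and $|A' \dot{+} A'| = 2|A'|-3$, Theorem~\ref{invDH} yields $A' = \{b + jd : 0 \leq j \leq k-1\}$ for some $b, d \in \Z/q\Z$. Let $a_j = \psi^{-1}(b + jd)$. The Freiman 2-isomorphism transports the equalities $(b+jd) + (b+(j+2)d) = 2(b+(j+1)d)$ back to $a_j + a_{j+2} = 2a_{j+1}$ in $H$ for each valid $j$, whence the successive differences $a_{j+1} - a_j$ are all equal to $h := a_1 - a_0$. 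Thus $A = \{a_0 + jh : 0 \leq j \leq k-1\}$ is a group arithmetic progression in $G$.

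The main obstacle is Step 2: constructing $\psi$ in the presence of the torsion subgroup $T$. Group homomorphisms $T \to \Z/q\Z$ are constrained by divisibility, so $q$ must be chosen compatibly with the exponent of $T$, while also being large enough to separate the at most $\binom{|A|}{2}+|A|$ sums in $A+A$. The hypothesis $p(G) > 2|A|-3$ is precisely what allows all torsion-relevant primes to exceed the relevant thresholds. One can alternatively invoke Ruzsa's general Freiman embedding lemma to guarantee $\psi$'s existence in this torsion-bounded setting.
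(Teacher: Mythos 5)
A preliminary remark: the paper does not prove Theorem~\ref{K2005} at all --- it is a survey statement quoted from K{\'a}rolyi \cite{Gyula Compact} --- so your attempt can only be measured against K{\'a}rolyi's actual argument, which is a compactness/lifting argument combined with the polynomial method, not a Freiman rectification. Within your proposal, the reverse direction is correct, Step 1 is fine, and the pullback in Step 3 is sound (a Freiman 2-isomorphism preserves all quadruple identities, including $a_j + a_{j+2} = 2a_{j+1}$, and preserves the restricted sumset cardinality since it is a bijection). The entire weight of the proof therefore rests on Step 2, and Step 2 has a genuine gap --- you acknowledge it is ``the main obstacle,'' but the two repairs you sketch both fail.

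Concretely: (i) The construction ``generic linear functional on $\Z^r$ plus a compatible character on $T$, then reduce modulo a sufficiently large prime $q$'' cannot work as described, because when $q \nmid |T|$ the only homomorphism $T \to \Z/q\Z$ is trivial; a Freiman 2-isomorphism need not be a homomorphism, but your recipe only produces homomorphism-like maps, which cannot respect the torsion relations in $T$ while landing in a prime field of coprime order. (ii) More fundamentally, whole-set rectification under the weak hypothesis $p(G) > 2|A|-3$ is false in general. Already for $H = \Z/p\Z$ with $p$ barely exceeding $2k-3$ (so $k \approx p/2$), a $k$-element set typically has additive quadruples that wrap around modulo $p$ and is \emph{not} 2-isomorphic to any set of integers; by work of Konyagin and Lev, guaranteeing rectifiability of all $k$-sets in $\Z/p\Z$ requires $p$ exponentially large in $k$. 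The case $H = \Z/p\Z$ is rescued by taking $q = p$ and the identity map, but for $H = \Z/p^2\Z$ or $H = \Z/p_1p_2\Z$ (all prime factors large) there is no such default choice, and the existence of a 2-isomorphic copy of $A$ in \emph{some} prime field is precisely the hard content of the theorem --- you cannot assume $A$ is structured before applying Theorem~\ref{invDH}, since the rectification must hold for every $A$ with $|A \dot{+} A| = 2|A|-3$. Nor does ``Ruzsa's general Freiman embedding lemma'' fill the hole: the Ruzsa/Green--Ruzsa modeling lemmas either require a torsion-free ambient group, or model only a large subset of $A$ rather than all of it, or produce a composite modulus $N$ unrelated to a prime field. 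This is exactly why K{\'a}rolyi's proof in \cite{Gyula Compact} passes through a compactness argument (lifting from $\Z/p\Z$ to prime-power and general abelian settings via characteristic-zero extensions) instead of a direct embedding. As it stands, your Step 2 begs the question the theorem is meant to answer.
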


Similar to the inverse to the Cauchy-Davenport theorem which holds in nonabelian groups, Gy. K{\'a}rolyi conjectured that the inverse to the Erd\H{o}s-Heilbronn problem in a nonabelian setting should hold, namely that only sets that are arithmetic progressions achieve the lower bound placed on their restricted sumset by the Erd\H{o}s-Heilbronn problem \cite{Gyula09}. 
We note that previous work by V. F. Lev proved an inverse Erd\H{o}s-Heilbronn theorem in an asymptotic sense for $\Z/p\Z$ with $p$ very large \cite{Lev1}, and that this result was improved by Van Vu and Philip M. Wood \cite{Vu}. Du and Pan \cite{Du/Pan} have recently submitted a proof for the following result:

\begin{theorem}
Suppose that $A,B$ are two non-empty subsets of the finite nilpotent group $G$. If $A \neq B$, then the cardinality of $A  \overset{\iota}{\cdot} B$ is at least the minimum of $p(G)$ and $|A|+|B|-2$. 
\end{theorem}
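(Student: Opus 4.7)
The plan is to improve the Balister--Wheeler bound (Theorem~\ref{theorem:GEH} specialized to $\theta=\id$, which has odd order~$1$ and so yields $|A\overset{\iota}{\cdot} B|\geq\min\{p(G),|A|+|B|-3\}$) by one unit under the hypothesis $A\neq B$, by induction on $|G|$ and exploiting that every nontrivial finite nilpotent group has a nontrivial center. The base case is $|G|$ prime: here $G\cong\Z/p\Z$ and the statement reduces to the classical improved Erd\H{o}s--Heilbronn inequality $|X\dot{+}Y|\geq\min\{p,|X|+|Y|-2\}$ whenever $X\neq Y$, a standard consequence of the Alon--Nathanson--Ruzsa polynomial method.

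For the inductive step, I first dispose of the trivial cases. If $|A|=1$, say $A=\{a\}$, then $A\overset{\iota}{\cdot} B=a\cdot(B\setminus\{a\})$ has at least $|B|-1=|A|+|B|-2$ elements. If $A\cap B=\emptyset$, then $A\overset{\iota}{\cdot} B=A\cdot B$, and the Cauchy--Davenport theorem for finite groups (Theorem~\ref{Finite Group C-D}) already gives $\min\{p(G),|A|+|B|-1\}$. So assume $|A|,|B|\geq 2$, $A\cap B\neq\emptyset$, and $A\neq B$.

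Since $G$ is nilpotent and $|G|$ is not prime, the Sylow $p(G)$--subgroup $P$ of $G$ is normal and nontrivial; its center $Z(P)$ is nontrivial and lies in $Z(G)$, so I pick a subgroup $H\leq Z(G)$ of order $p(G)$. Let $\pi:G\to G/H$ denote the quotient map and set $\bar A=\pi(A)$, $\bar B=\pi(B)$. Centrality of $H$ makes every fiber of $\pi$ a two-sided coset of $H$, and $G/H$ is nilpotent with $p(G/H)\geq p(G)$. For any $\bar a\neq\bar b$ in $G/H$, identifying the fiber $\pi^{-1}(\bar a\cdot\bar b)$ with $H\cong\Z/p(G)\Z$ and using Cauchy--Davenport in $H$ gives $|(A\cdot B)\cap\pi^{-1}(\bar a\cdot\bar b)|\geq\min\{p(G),r_A(\bar a)+r_B(\bar b)-1\}$, where $r_A(\bar g):=|A\cap\pi^{-1}(\bar g)|$; moreover all such products lie automatically in $A\overset{\iota}{\cdot}B$ because $\bar a\neq\bar b$. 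Only fibers above the diagonal $\bar a=\bar b$ can lose elements to the restriction $a\neq b$.

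Two subcases then complete the argument. If $\bar A\neq\bar B$, the inductive hypothesis in $G/H$ yields $|\bar A\overset{\iota}{\cdot}\bar B|\geq\min\{p(G),|\bar A|+|\bar B|-2\}$, and a Kneser--style aggregation of the fiberwise bounds over representatives of $\bar A\overset{\iota}{\cdot}\bar B$ transfers this to $\min\{p(G),|A|+|B|-2\}$ in $G$. If $\bar A=\bar B$, then $A$ and $B$ occupy the same $H$--cosets but $A\neq B$, so some coset $gH$ has $A\cap gH\neq B\cap gH$; identifying these sets with distinct subsets of $\Z/p(G)\Z$ and applying the improved $\Z/p\Z$--inequality for distinct sets within that distinguished coset secures the extra unit, while the remaining cosets contribute via the basic Cauchy--Davenport count. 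The main obstacle is the coset--by--coset bookkeeping in this second subcase: one must verify that the $+1$ gain in the distinguished coset is not cancelled by a deficit elsewhere, and to rule out borderline configurations in which $A$ and $B$ are nearly identical arithmetic progressions across the cosets of $H$ I would invoke the inverse theorems of K\'arolyi (Theorems~\ref{invDH} and~\ref{K2005}).
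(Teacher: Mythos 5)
Your proposal cannot be checked against a proof in this paper, because the paper does not prove this statement: it is quoted from Du and Pan \cite{Du/Pan}, where the argument occupies a substantial manuscript. Measured on its own merits, your skeleton opens correctly — for nilpotent $G$ a central subgroup $H\leq Z(G)$ of order $p(G)$ does exist (the center of the normal Sylow $p(G)$-subgroup is central in $G$), $p(G/H)\geq p(G)$, the disposal of $|A|=1$ and $A\cap B=\emptyset$ is fine, and the observation that fibers over nondiagonal pairs $\bar a\neq\bar b$ suffer no loss from the restriction is right. But there is a genuine gap exactly where you yourself flag ``the main obstacle'': the aggregation of fiberwise bounds is asserted, not performed, and it is the entire content of the theorem. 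In the subcase $\bar A=\bar B$, every element of the quotient sumset admitting only diagonal representations $\bar a=\bar b$ forces an Erd\H{o}s--Heilbronn-type fiber bound $\min\{p, r_A(\bar a)+r_B(\bar a)-3\}$ (improved to $-2$ only in the one distinguished coset where the fiber sets differ), and these per-fiber deficits can a priori accumulate over many diagonal cosets; proving that the total deficit is at most $2$ globally, rather than roughly $2$ per identical diagonal coset, is the heart of the matter and nothing in the sketch addresses it. The subcase $\bar A\neq\bar B$ has the same problem in milder form: the inductive bound in $G/H$ counts cosets while the target $|A|+|B|-2$ counts elements, and converting fiber bounds $\min\{p, r_A(\bar a)+r_B(\bar b)-1\}$ attached to chosen representations into the global element count requires a nontrivial matching/induction argument of the kind underlying K\'arolyi's group-extension proof of Cauchy--Davenport; it is not a one-line ``Kneser-style'' step, and the difficulty of exactly this bookkeeping is why even the weaker $|A|+|B|-3$ bound for general finite groups (Theorem \ref{theorem:GEH}) required the machinery of Balister and Wheeler.

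Two further points. The patch you propose for borderline configurations cannot work as stated: Theorems \ref{invDH} and \ref{K2005} apply only to the case $A=B$, and only under $|A|\geq 5$ and $p(G)>2|A|-3$; your problematic configurations have $A\neq B$ by hypothesis, the coset intersections may have fewer than $5$ elements, and no largeness of $p(G)$ relative to fiber sizes is available, so these inverse theorems give no leverage. Finally, your base case is misattributed: the inequality $|X\dot{+}Y|\geq\min\{p,|X|+|Y|-2\}$ for distinct $X,Y\subseteq\Z/p\Z$ is a standard consequence of Alon--Nathanson--Ruzsa \cite{Alon} only when $|X|\neq|Y|$; the equal-cardinality case $X\neq Y$, $|X|=|Y|$ is precisely the ``exceptional case'' settled much later by K\'arolyi \cite{Gyula09} (restated as Theorem 5.4 in this paper). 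That result is citable, so the base case survives, but it is itself deep and must be invoked as such rather than treated as routine.
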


They also proved that if $|A \dot{+} A| = 2|A|-3$ with $A$ a non-empty subset of a finite group G with $|A| < (p(G) +3)/2$, then $A$ is commutative. 

Thus we formulate a conjecture for the inverse Erd\H{o}s-Heilbronn problem to hold in groups that are not nilpotent. We only consider restricted product sets with $\theta = \iota$ being the identity automorphism. 

\begin{conjecture}\label{IEHconj}
Let $A,B$ be nonempty subsets of a finite (not necessarily abelian), non-nilpotent group $G$ where $p(G)$ is as in Definition \ref{definition:minimal torsion
element}.  Further suppose $|A| = k \geq 3$, $|B| = l \geq 3$, and $k+l - 3 < p(G) $. Then $|A \overset{\iota}{\cdot} B| = |A| + |B| -3$ where $A\overset{\iota}{\cdot} B = \{ ab \hspace{0.1cm} \vert \hspace{0.1cm} a\in A, b\in B, a\neq b\}$ if and only if there exists $a,q \in G$ such that
$$ A = \{ a, aq, aq^2, \dots, aq^{k-1}\} \text{   and   } B = \{a, qa, q^2 a, \dots , q^{l-1}a\}$$ where $aq^{k-1} = q^{l-1}a$, i.e. $A,B$ share the same endpoints. 
\end{conjecture}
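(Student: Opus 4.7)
The plan is to establish the biconditional in two separate directions, handling the easier sufficiency first.

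For sufficiency, I would substitute the parametrization directly. Every product of the form $(aq^i)(q^j a)$ collapses to $aq^{i+j}a$, so the unrestricted product set lies inside $\{aq^m a : 0 \leq m \leq k+l-2\}$. The hypothesis $p(G) > k+l-3$ forces $|q| \geq p(G) \geq k+l-2$, so either $|q| = k+l-2$ (in which case $q^{k+l-2}=1$ and the extreme elements $a^2$ and $aq^{k+l-2}a$ coincide as one element) or $|q| \geq k+l-1$ (in which case they are distinct). In either case the pairs realizing $i+j = 0$ and $i+j = k+l-2$ are unique, and these are precisely the pairs $(0,0)$ and $(k-1, l-1)$ that the restriction $a \neq b$ removes, the latter by virtue of the hypothesis $aq^{k-1}=q^{l-1}a$. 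A routine check shows that every intermediate exponent $1 \leq m \leq k+l-3$ retains at least one surviving pair, yielding $|A \overset{\iota}{\cdot} B| = k+l-3$ in both subcases.

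For necessity, my strategy is a three-step reduction. First, record that $(A,B)$ is a critical pair: Theorem~\ref{theorem:GEH} applied with $\theta = \iota$ (which has order $1$, so $\delta_\theta = 0$) yields the lower bound $\min\{p(G), k+l-3\} = k+l-3$, attained by hypothesis. Second, reduce to the abelian setting by leveraging the Du--Pan commutativity theorem: near-minimal restricted sumsets ought to force commutativity, and one would argue that the subgroup $\langle A \cup B\rangle$ is abelian --- this is the subtle step, relying crucially on the non-nilpotency hypothesis to exclude pathological extremal configurations. Third, working inside an abelian subgroup, apply Theorem~\ref{K2005} (and its natural cross-sum extension obtained by upgrading K\'arolyi's arguments from $A=B$ to $A\neq B$) to conclude both $A$ and $B$ are group arithmetic progressions; a critical-pair comparison then forces them to share a common ratio $q$ (with $A$ a right-AP and $B$ a left-AP in $q$), and a final direct count --- essentially the sufficiency computation run backwards --- extracts the endpoint condition $aq^{k-1}=q^{l-1}a$ from the equality $|A \overset{\iota}{\cdot} B| = k+l-3$ rather than the generic $k+l-2$.

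The main obstacle, I expect, is the middle step: promoting the Du--Pan diagonal commutativity conclusion to the off-diagonal setting $A \neq B$ in a non-nilpotent ambient. Du--Pan delivers commutativity from the hypothesis $|A \overset{\iota}{\cdot} A| = 2|A| - 3$, but here we control only $|A \overset{\iota}{\cdot} B|$, and one cannot freely replace $B$ by $A$ without losing information about the critical-pair structure. The non-nilpotency hypothesis should exclude extremal configurations supported on small Sylow subgroups, but making this precise seems to require new ideas beyond the results in the excerpt. A more modest partial result --- which I suspect is what the following sections establish --- is to \emph{assume} a priori that $A$ and $B$ are one-sided arithmetic progressions with a common ratio $q$, in which case only the endpoint computation remains and the sufficiency argument essentially reverses to yield the characterization.
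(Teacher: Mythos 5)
First, a point of status: the statement you are proving is Conjecture~\ref{IEHconj}, which the paper does \emph{not} prove --- it is posed as open (Section 8 states ``the full conjecture still eludes us''), with only the remark that the ``if'' direction is trivial, plus partial ``only if'' results (Theorems 5.1 and 6.1) under exactly the a priori hypothesis you anticipate in your final paragraph: that $A$ and $B$ are one-sided progressions with a common ratio. Your sufficiency argument is correct and in fact more careful than the paper's one-line remark; your ``routine check'' that every intermediate exponent $1\le m\le k+l-3$ survives does need the observation that two adjacent restricted pairs $aq^i=q^{m-i}a$, $aq^{i+1}=q^{m-i-1}a$ force $a^{-1}qa=q^{-1}$ and hence $q^m=1$, impossible since $|q|\ge p(G)\ge k+l-2$ --- and this is precisely the trick the paper uses in Subcase 1 of its proof of Theorem 6.1, so your approach there is aligned with the paper's partial results.

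Your necessity sketch, however, contains a step that does not merely lack detail but is provably false: the proposed reduction in which $\langle A\cup B\rangle$ is argued to be abelian. The paper's own Example~\ref{iehexample} refutes it. There $A,B$ form a critical pair with $|A\overset{\iota}{\cdot}B|=|A|+|B|-3$ in a non-nilpotent group, yet already two elements of $A$ fail to commute: writing elements as $\left(\left(\begin{smallmatrix}x\\y\end{smallmatrix}\right),z\right)$, one checks $\left(\left(\begin{smallmatrix}0\\0\end{smallmatrix}\right),1\right)\cdot\left(\left(\begin{smallmatrix}2\\0\end{smallmatrix}\right),1\right)=\left(\left(\begin{smallmatrix}4\\0\end{smallmatrix}\right),2\right)$ while the reversed product is $\left(\left(\begin{smallmatrix}2\\0\end{smallmatrix}\right),2\right)$. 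This is no accident: in any abelian group, the conjectured configuration with shared endpoints $aq^{k-1}=q^{l-1}a$ forces $q^{k-l}=1$ and hence $A=B$ (consistent with Du--Pan for nilpotent groups), so the genuinely new critical pairs with $A\ne B$ that the conjecture describes are \emph{inherently noncommutative}, and no reduction to the abelian setting --- nor any ``cross-sum extension'' of Theorem~\ref{K2005}, which concerns only $A\overset{\cdot}{+}A$ --- can capture them. Relatedly, your reading of the non-nilpotency hypothesis is backwards: it is not a tool for excluding pathologies but the condition that \emph{permits} critical pairs with $A\ne B$, since Du--Pan show that in nilpotent groups $A\ne B$ already implies $|A\overset{\iota}{\cdot}B|\ge\min\{p(G),|A|+|B|-2\}$. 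The honest difficulty assessment in your last paragraph is accurate --- the paper's strongest result toward necessity (Theorem 7.1, from the forthcoming \cite{JKRW}) requires the drastically stronger bound $p(G)>(2k+2l)^{k+l}$ --- but the abelianization strategy itself, not just its execution, must be abandoned.
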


We note that the if statement of the conjecture is trivial, and if $G$ is nilpotent, such pairs $A$ and $B$ only exist when $A=B$ is a progression lying in an abelian subgroup as shown in \cite{Du/Pan}. 

We now present an example of sumset addition in a non-nilpotent group that gives evidence that extra critical pairs can indeed arise. The difficulty in testing this conjecture is finding an appropriate group whose $p(G)$ is relatively large compared to the cardinalities of sets $A$ and $B$. Standard nonabelian groups such as dihedral groups or symmetric groups do not satisfy this condition because $p(G) = 2$ in these groups. Thus in the following example, we construct a large (in terms of $p(G)$) nonnilpotent group to test the conjecture. Note: since writing this manuscript, we have discovered simpler and more general examples that will be presented in future work \cite{JKRW}.

\begin{example}\label{iehexample}
\end{example}
Let $G$ be a nonabelian group constructed by $G = \left( \mathbb{Z}_{47} \times \mathbb{Z}_{47} \right) \rtimes_{\phi} \mathbb{Z}_{23}$.\footnote{\baselineskip=12pt We denote $\Z/47\Z$ as $\mathbb{Z}_{47}$ for notational purposes, and note that $\mathbb{Z}_{47} \cong \mathbb{F}_{47}^{+}$.} Since $Aut(\mathbb{Z}_{47} \times \mathbb{Z}_{47}) \cong GL_{2} (\mathbb{F}_{47})$, we construct the homomorphism $\phi:\mathbb{Z}_{23} \to GL_{2} (\mathbb{F}_{47})$ as follows:

$$\phi(x)=  \left( \begin{array}{cc}
2^{x} & 0  \\
0 & 1 \end{array} \right).$$

Explicitly, we can think of elements of G having the form $ \left( \left(\begin{array}{cc} x \\ y \end{array}\right) , z \right)$ where the group operation is
$$
\left( \left(\begin{array}{cc} x \\ y \end{array}\right) , z \right) \cdot_{G}  \left( \left(\begin{array}{cc} x' \\ y' \end{array}\right) , z' \right)  =  \left( \left(\begin{array}{cc} x \\ y \end{array}\right) + \phi(z) \left(\begin{array}{cc} x' \\ y' \end{array}\right), z+z' \right).$$
 $$
= \left( \left( \begin{array}{cc} x+2^{z}x' \\ y + y' \end{array}\right), z+z' \right).$$

Observe that $p(G) = 23$ since $|G| = 23 \cdot 47^2 $. Take $$A = \left\{   \left( \left(\begin{array}{cc} 0 \\ 0 \end{array}\right) , 1 \right) \cdot \left( \left(\begin{array}{cc} 1 \\ 0 \end{array}\right) , 0 \right)^{k} \bigm| 0 \leq k \leq 4 \right\}$$
$$=  \left\{ 
\left( \left(\begin{array}{cc} 0 \\ 0 \end{array}\right) , 1 \right),
\left( \left(\begin{array}{cc} 2 \\ 0 \end{array}\right) , 1 \right),
\left( \left(\begin{array}{cc} 4 \\ 0 \end{array}\right) , 1 \right),
\left( \left(\begin{array}{cc} 6 \\ 0 \end{array}\right) , 1 \right), 
\left( \left(\begin{array}{cc} 8 \\ 0 \end{array}\right) , 1 \right) \right\}.$$

and let $$ B = \left\{  \left( \left(\begin{array}{cc} 1 \\ 0 \end{array}\right) , 0 \right)^{l} \cdot \left( \left(\begin{array}{cc} 0 \\ 0 \end{array}\right) , 1 \right)  \bigm| 0 \leq l \leq 8 \right\}$$
$$=  \left\{  
\left( \left(\begin{array}{cc} 0 \\ 0 \end{array}\right) , 1 \right),
\left( \left(\begin{array}{cc} 1 \\ 0 \end{array}\right) , 1 \right),
\left( \left(\begin{array}{cc} 2 \\ 0 \end{array}\right) , 1 \right),
\left( \left(\begin{array}{cc} 3 \\ 0 \end{array}\right) , 1 \right),
\left( \left(\begin{array}{cc} 4 \\ 0 \end{array}\right) , 1 \right),\right.$$
$$ \left. 
\left( \left(\begin{array}{cc} 5 \\ 0 \end{array}\right) , 1 \right),
\left( \left(\begin{array}{cc} 6 \\ 0 \end{array}\right) , 1 \right),
\left( \left(\begin{array}{cc} 7 \\ 0 \end{array}\right) , 1 \right),
\left( \left(\begin{array}{cc} 8 \\ 0 \end{array}\right) , 1 \right) \right\}.$$

So $A$ is  right arithmetic progression with cardinality $|A|=5$ and $B$ is a left arithmetic progression with cardinality $|B| = 9$. Further, $A$ and $B$ share the same endpoints and have the same ``common difference" $q$. Explicitly computing $A \overset{\iota}{\cdot} B$, we get 11 elements, which is equal to $|A| + |B| -3$ as Conjecture \ref{IEHconj} predicts. 


$$ A \overset{\iota}{\cdot} B= \left\{
\left( \left(\begin{array}{cc} 2 \\ 8 \end{array}\right) , 4 \right),
\left( \left(\begin{array}{cc} 4 \\ 8 \end{array}\right) , 4 \right),
\left( \left(\begin{array}{cc} 6 \\ 8 \end{array}\right) , 4\right),
\left( \left(\begin{array}{cc} 8 \\ 8 \end{array}\right) , 4\right),
\left( \left(\begin{array}{cc} 10 \\ 8 \end{array}\right) , 4\right), \right.$$

$$  \left.
\left( \left(\begin{array}{cc} 12 \\ 8 \end{array}\right) , 4\right),
\left( \left(\begin{array}{cc} 14 \\ 8 \end{array}\right) , 4 \right),
\left( \left(\begin{array}{cc} 16 \\ 8 \end{array}\right) , 4\right),
\left( \left(\begin{array}{cc} 18 \\ 8 \end{array}\right) , 4 \right),
\left( \left(\begin{array}{cc} 20 \\ 8 \end{array}\right) , 4 \right),
\left( \left(\begin{array}{cc} 22 \\ 8 \end{array}\right) , 4 \right) \right\}.$$

In the following two sections, we prove a series of results that prove the forward direction of this conjecture under the assumption that we have a priori knowledge that $A$ and $B$ are arithmetic progressions with the same common difference. 

\section{An Extension to $\Z/n\Z$ for the Inverse Theorem of the Dias da Silva-Hamidoune Theorem}

Our first result extends Theorem \ref{invDH} in $\Z/n\Z$ for composite $n$ by assuming we have a priori knowledge of the sets $A,B$ as arithmetic progressions with the same common difference, and characterizing when such $A,B$ form a critical pair, i.e. reach the lower bound of the Erd\H{o}s-Heilbronn Problem. 

\begin{theorem} Let $A,B \subseteq G=\mathbb{Z}/n\mathbb{Z}$ where $|A|=k, |B|=l$ and $p(G)$ is the smallest prime dividing n. Suppose $p(G) > k+ l - 3$ where $k,l \geq 3.$ Further suppose that $A,B$ are arithmetic progressions with the same common difference. Then we have that:
$$ |A\overset{\cdot}{+} B| = |A| + |B| -3 \text{ implies } A = B.$$
\end{theorem}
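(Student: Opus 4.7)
The plan is to parametrize $A$ and $B$ as arithmetic progressions and analyze the representations of each element of $A + B$. Write $A = \{a_0 + id : 0 \leq i \leq k-1\}$ and $B = \{b_0 + jd : 0 \leq j \leq l-1\}$ with common difference $d \in \mathbb{Z}/n\mathbb{Z}$. Since $k \geq 3$, we have $d \neq 0$, and since $p(G) \geq k + l - 2 \geq 4$ is prime we have $p(G) \geq 5$. The order $m$ of $d$ divides $n$, so every prime factor of $m$ is at least $p(G) \geq 5$; in particular $m$ is odd and $m \geq k + l - 2$. The sumset equals $A + B = \{a_0 + b_0 + sd : 0 \leq s \leq k + l - 2\}$, which has $k + l - 1$ distinct values when $m \geq k + l - 1$, while in the edge case $m = k + l - 2 = p(G)$ the endpoints $s = 0$ and $s = k + l - 2$ collapse to one.

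Next I would determine which $v = a_0 + b_0 + sd$ lie in $A \overset{\cdot}{+} B$ by tracking representations. A value $v$ is excluded iff every representation $v = (a_0 + id) + (b_0 + jd)$ with $i + j \equiv s \pmod{m}$ is diagonal, i.e., satisfies $a_0 + id = b_0 + jd$. The diagonal condition rearranges to $(2i - s)d = b_0 - a_0$, which (using that $2$ is invertible modulo the odd integer $m$) has a unique solution $i \in \mathbb{Z}/m\mathbb{Z}$; since the valid interval for $i$ has length at most $\min(k, l) < m$, at most one representation of $v$ is diagonal. For any intermediate $s \in \{1, 2, \ldots, k + l - 3\}$, the representation count is at least $2$ (since $k, l \geq 3$), so a non-diagonal representation always exists and $v \in A \overset{\cdot}{+} B$. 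This accounts for $k + l - 3$ elements, so the equality $|A \overset{\cdot}{+} B| = k + l - 3$ forces the endpoints $s = 0$ and $s = k + l - 2$ (or the single collapsed value) to be excluded as well.

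In the generic case, the two endpoints are distinct with unique representations $(0,0)$ and $(k-1, l-1)$; exclusion forces $a_0 = b_0$ and $(k - l)d = b_0 - a_0 = 0$, and since $|k - l| < m$ this yields $k = l$ and therefore $A = B$. The main obstacle is the edge case $m = k + l - 2 = p(G)$, where the endpoints coalesce into $a_0 + b_0$ with both representations present; excluding this single value still demands both representations be diagonal, yielding $a_0 = b_0$ and $(k - l)d = 0$, hence $k = l$. But then $m = 2k - 2$ would have to equal the prime $p(G)$, which is impossible for $k \geq 3$ since $2k - 2$ is even and exceeds $2$. Thus the edge case is incompatible with the hypothesis, and the generic analysis concludes $A = B$.
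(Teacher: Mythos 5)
Your proof is correct, and it reaches the paper's conclusion by the same overall strategy --- parametrize both progressions by the common difference $d$, show every ``interior'' sum survives the restriction, and force the two endpoint conditions $a_0=b_0$ and $(k-l)d=0$ --- but the mechanism differs in two ways worth recording. Where the paper rewrites a diagonal sum via $a+sd+b+td=a+(s\pm1)d+b+(t\mp1)d$ and reduces the far endpoint to the near one by a WLOG reversal, you count representations of each value $a_0+b_0+sd$ and observe that the diagonal condition $(2i-s)d=b_0-a_0$ pins down at most one index $i$ because $2$ is invertible modulo the odd order $m$ of $d$; since each interior value has at least two representations, at most one diagonal, all interior values lie in $A\overset{\cdot}{+}B$. (One nitpick: in your wrap-around case the valid indices $\{0,k-1\}$ do not form an interval, but both lie in $[0,k-1]$ with $k-1<m$, so your uniqueness conclusion still stands.) More significantly, you explicitly isolate the wrap-around case $m=k+l-2=p(G)$, which the hypothesis $p(G)>k+l-3$ does not exclude (e.g.\ $n=5$, $k=3$, $l=4$). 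The paper's proof tacitly assumes $|A+B|=k+l-1$: both its parenthetical claim that $A\overset{\cdot}{+}B=A+B$ forces $|A\overset{\cdot}{+}B|\geq|A|+|B|-1$, and its final estimate $|A\overset{\cdot}{+}B|\geq|(A+B)\setminus\{a+a\}|\geq|A|+|B|-2$ used to rule out $k>l$, require the order of $d$ to be at least $k+l-1$, while the hypotheses only guarantee $m\geq k+l-2$; in the collapsed case the latter bound degrades to $k+l-3$ and yields no contradiction as written. Your parity argument --- exclusion of the collapsed value forces $k=l$, whence the odd prime $p(G)$ would equal the even number $2k-2$ --- disposes of exactly this case, so your representation-counting route is not just a cosmetic variant but actually closes a small gap that the published argument glosses over.
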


\begin{proof}
Let $d$ be the common difference of the arithmetic progressions $A$ and $B$, i.e. $A = \{ a + sd \hspace{0.2cm}| \hspace{0.2cm} 0 \leq s \leq k-1\}$ and $B = \{b + t d \hspace{0.2cm} | \hspace{0.2cm} 0 \leq t \leq l-1\}.$ Without loss of generality, we can suppose that $|A|\geq |B|,$ i.e. $ k \geq l$.

We have that
$$ A\overset{\cdot}{+} B = \{ a + sd + b + td \hspace{0.2cm}|\hspace{0.2cm} 0 \leq s \leq k-1 ,\hspace{0.2cm} 0\leq t \leq l-1,\hspace{0.2cm} a+sd\neq b + td\}.$$
Since
\begin{equation}  a+ sd + b + td = a + (s \pm 1)d+ b + (t \mp 1)d, \end{equation}
 then even if $a + sd = b + td$, by the above we have that the sum can still be written as the sum of two distinct elements from $A$ and $B$ unless
\begin{itemize}
\item[] (i) $s=t=0$ \hspace{0.15cm} \text{or}
\item[] (ii) $ s= k-1$ and $t= l-1.$
\end{itemize}

 In other words, we can find another pair of elements, $a + (s \pm 1)d \in A$ , and $b + (t \mp 1)d \in B$, that yield the same sum, unless the term in question is a shared endpoint of the arithmetic progression where $s,t = 0$ corresponds to the first endpoint and $s = k-1, t = l-1$ corresponds to the last endpoint.

Thus $A \overset{\cdot}{+} B  = A + B$ ($\Rightarrow |A \overset{\cdot}{+} B| \geq |A|+|B|-1$, contrary to our assumption), unless:
\begin{itemize}
\item[] (i)  $a=b$ \hspace{0.15cm} \text{or}
\item[] (ii) $a+(k-1)d = b + (l-1)d$
\end{itemize}
Notice without loss of generality that (ii) can be reduced to (i) by putting $\bar{a} = a + (k-1)d,  \bar{b} = b+(l-1)d$ and forming the arithmetic progressions by setting $\bar{d} = - d$.

Now since $a=b$, we have that $$ A\overset{\cdot}{+} B = \{ a + sd + a + td \hspace{0.2cm}| \hspace{0.2cm} sd \neq td\}.$$  Notice if $sd=td$ for $s\neq t$ (say without loss of generality that $s > t$), then $(s-t)d \equiv 0$ in $\mathbb{Z}/n\mathbb{Z}$, which implies that $n| (s-t)d$ . Because $d\not \equiv 0$, there is a prime $p_0$ dividing $n$ such that $p_0|(s-t)$. By our definition of $p(G)$ as the smallest prime dividing $n$, we have $$ k \geq (s-t) \geq p_0 \geq p(G) > k+ l - 3 \geq k+3-3 = k $$
which is a contradiction (Note: if we had assumed $t>s$, we would have derived a similar contradiction using $l$). Thus we must conclude that $sd = td$ implies $s=t$.

Again, we now point out that if $sd = td$, then we can write $$ a +sd+ a + td = a + (s \pm 1)d + a + (t \mp 1)d$$ unless $s,t = 0$ or $s = t= k-1= l-1$ by the previous paragraph. But if $k>l$, then we only get the case $s =t = 0$ which means that $$ |A\overset{\cdot}{+} B| \geq  | A+B\setminus \{a+a\} | \geq |A|+|B| - 1 -1 = |A| + |B| - 2.$$ This is a contradiction to our assumption, so we are forced to conclude that $k = l$, and so then $A = B$. This completes the proof.
\end{proof}

\begin{corollary}
Let $A,B \subseteq \Z/n\Z$ be arithmetic progressions with the same common difference where $|A| = k, |B| = l$. Suppose $p > k+l-3$ where $k,l \geq 5$. Then we have that:
$$ |A\overset{\cdot}{+} B| = |A| + |B| -3 \text{ if and only if } A = B.$$
\end{corollary}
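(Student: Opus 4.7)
The corollary is an ``if and only if'' sharpening of the preceding theorem, so my plan is to handle the two directions separately. The forward implication, $|A \overset{\cdot}{+} B| = |A| + |B| - 3 \Rightarrow A = B$, is subsumed by the theorem just proved, since the hypotheses $k, l \geq 5$ and $p > k + l - 3$ are at least as strong as those there. Everything therefore reduces to proving the converse: if $A = B$ is a $k$-term arithmetic progression, then $|A \overset{\cdot}{+} A| = 2k - 3$.

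For the converse, write $A = B = \{a + sd : 0 \leq s \leq k-1\}$. Every element of $A \overset{\cdot}{+} A$ has the form $2a + (s + t)d$ with $0 \leq s, t \leq k - 1$ and $s \neq t$. The first step is to describe the set of attainable sums $s + t$. The extremes $1 = 0 + 1$ and $2k - 3 = (k-2) + (k-1)$ are achieved, and every intermediate value $m \in \{2, \ldots, 2k-4\}$ admits a representation $m = s + t$ with $s \neq t$ (for even $m$, take $\{m/2 - 1,\, m/2 + 1\}$, which stays inside $\{0, \ldots, k-1\}$ thanks to $k \geq 3$). This yields exactly $2k - 3$ candidate sums.

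The second step is to show that distinct values of $s + t$ produce distinct elements of $\Z/n\Z$. If $2a + (s+t)d \equiv 2a + (s'+t')d \pmod n$, then $n$ divides $(s+t-s'-t')d$. Since $d \not\equiv 0$ and every prime factor of $n$ is at least $p$, while $|s+t-s'-t'| \leq 2k - 4 < 2k - 3 < p$, the same primality argument used inside the theorem forces $s + t = s' + t'$. Consequently $|A \overset{\cdot}{+} A| = 2k - 3 = |A| + |B| - 3$, completing the converse.

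The main (and really only) obstacle is the first step: the elementary enumeration confirming that every integer in $\{1, \ldots, 2k - 3\}$ is expressible as a sum of two distinct elements of $\{0, \ldots, k - 1\}$. Everything else is either a direct appeal to the preceding theorem or a verbatim reuse of its primality-based divisibility estimate.
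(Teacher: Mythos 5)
Your argument is correct, and for the forward direction it coincides with the paper's: both simply invoke Theorem 5.1, whose hypotheses ($k,l \geq 3$, $p(G) > k+l-3$) are indeed subsumed by yours. For the converse you take a genuinely different route. The paper disposes of it in one line by citing Theorem 4.6 (K{\'a}rolyi's inverse Dias da Silva--Hamidoune theorem for abelian groups), applied with $G = \Z/n\Z$, $A = B$ a $k$-term progression, $k \geq 5$, and $p(G) > 2k-3$ --- that is, it invokes only the easy ``if'' half of that equivalence as a black box. You instead prove that half from scratch: the attainable restricted sums $s+t$ are exactly $\{1, \dots, 2k-3\}$, and distinct values of $s+t$ give distinct residues modulo $n$ by the same divisibility estimate (a prime $p_0 \geq p(G) > 2k-3$ would have to divide $(s+t)-(s'+t')$, whose absolute value is at most $2k-4$) already used inside Theorem 5.1. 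Your enumeration is sound, though you should state explicitly that $s+t = 0$ and $s+t = 2k-2$ are \emph{unattainable} because each forces $s = t$; without that remark you have only shown $|A \overset{\cdot}{+} A| \geq 2k-3$, not equality, and equality is what the converse asserts (the upper bound is immediate, but it belongs in the proof). As for what each approach buys: the paper's citation is shorter and consistent with its survey style, but leans on K{\'a}rolyi's result with its hypothesis $|A| \geq 5$ (which is precisely why the corollary strengthens $k,l \geq 3$ to $k,l \geq 5$); your inlined computation keeps the corollary entirely elementary and self-contained, in the spirit of the paper's stated preference for elementary methods, and in fact shows that the $k,l \geq 5$ hypothesis is not needed for the direction you prove directly --- $k,l \geq 3$ would suffice there, with $5$ required only if one routes through Theorem 4.6.
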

\begin{proof}
The forward direction is a consequence of Theorem 5.1. The converse is a special case of Theorem 4.6 where the abelian group is $\Z/n\Z$.
\end{proof}

\begin{remark} 
\end{remark}
As pointed out to us by Gy. K{\'a}rolyi, the assumption that $A$ and $B$ are arithmetic progressions can be dropped when we are in $\Z/p\Z$ where $p$ is prime to yield the following result \cite{Gyula09}:
\begin{theorem}
Let $A,B \subseteq \Z /p\Z$ be nonempty subsets such that $p \geq |A| + |B| -2$. Then $|A \overset{\cdot}{+} B | = |A|+|B| - 3$ if and only if $A = B$ and one of the following holds:
\begin{itemize}
\item[] $(i)$ $|A|=2$ or $|A| = 3$;
\item[] $(ii)$ $|A| = 4,$ and $A = \{a, a+d, c, c+d\}$;
\item[] $(iii)$ $|A| \geq 5,$ and $A$ is an arithmetic progression.
\end{itemize}
\end{theorem}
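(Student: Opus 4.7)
The plan is to split the argument into two stages: (a) show that any critical pair must satisfy $A=B$, and (b) classify the structure of $A$ under $A=B$. The reverse ``if'' direction is a direct verification of $|A\overset{\cdot}{+} A| = 2|A|-3$ for each listed form, paralleling the counting in the proof of Theorem 5.1.

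For Stage (a), my starting point is the set-theoretic identity
\[
A + B \;=\; (A\overset{\cdot}{+} B) \;\cup\; \{2a : a \in A \cap B\},
\]
which gives $|A+B| \le |A\overset{\cdot}{+} B| + |A \cap B|$. Combining this with the Cauchy--Davenport bound $|A+B| \ge \min\{p,|A|+|B|-1\}$ and the hypothesis $p \ge |A|+|B|-2$ forces $|A\cap B| \ge 2$ in the generic regime. I would then apply Vosper's inverse theorem to $A+B$. The arithmetic-progression case (both $A$ and $B$ are APs with a common difference $d$) is the decisive one: adapting the counting in the proof of Theorem 5.1, the only sums in $A+B$ that fail to be representable as $a'+b'$ with $a'\ne b'$ are the two endpoint sums, and losing both simultaneously forces $A$ and $B$ to share the first endpoint and to have equal length, hence $A=B$. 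The trivial Vosper case $|A|=1$ or $|B|=1$ is excluded by the size hypothesis. The remaining Vosper cases ($A+B = \mathbb{Z}/p\mathbb{Z}$, or $|A+B|=p-1$ with $B$ the complement of $c-A$) require separate ruling-out: the two missing elements of $A+B\setminus(A\overset{\cdot}{+} B)$ must be ``lone doubles'' $2a$ with $a\in A\cap B$ admitting no alternative representation, and this rigid constraint is incompatible with the ambient structure unless the configuration already falls into the AP case. In the non-critical regime $|A+B|>|A|+|B|-1$, I would invoke a Freiman $3k-4$-type theorem in $\mathbb{Z}/p\mathbb{Z}$ to still extract arithmetic-progression structure from the near-critical sumset.

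For Stage (b), set $k=|A|$. When $k\in\{2,3\}$, every $k$-element set satisfies $|A\overset{\cdot}{+} A| = 2k-3$ automatically, giving case (i) of the conclusion. When $k\ge 5$, the hypothesis yields $p > 2k-3$, so Theorem \ref{invDH} applies directly and forces $A$ to be an arithmetic progression. For the remaining case $k=4$, $A$ admits exactly $\binom{4}{2}=6$ unordered pair sums, and the critical-pair condition means that exactly one coincidence $a_i + a_j = a_k + a_l$ with $\{i,j\}\ne\{k,l\}$ occurs. Nontrivial overlap of the index pairs would force a repeated element in $A$, so the coincidence is Sidon-type; writing $x+y=z+w$ with $\{x,y\}\cap\{z,w\}=\emptyset$ and setting $d = z-x = y-w$, one obtains $A = \{x,x+d,w,w+d\}$, exactly case (ii) of the conclusion.

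The hard part will be Stage (a): the degenerate Vosper cases and the regime $|A+B|>|A|+|B|-1$ demand careful structural bookkeeping, and the exceptional edge case $|A|+|B|=p+2$ (where only one sum is missing from $A\overset{\cdot}{+} B$ rather than two) must be dispatched separately. Once Stage (a) is in hand, Stage (b) is a clean application of the inverse Dias da Silva--Hamidoune theorem together with a direct $|A|=4$ analysis.
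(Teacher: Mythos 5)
There is no paper proof to compare against here: the paper only quotes this statement from K\'arolyi \cite{Gyula09}, and explicitly remarks that its proof ``relies on the polynomial method of Alon, Nathanson, and Rusza'' --- indeed the authors say it is not clear to them whether their elementary methods could reprove it. So your outline, which promises an elementary Vosper-plus-Freiman route, would be a genuinely new proof if it worked; it must therefore be judged on its own, and Stage (a) has a real gap. The identity $|A+B| \le |A \overset{\cdot}{+} B| + |A\cap B|$ localizes $|A+B|$ only in a window of width $|A\cap B| \le \min(|A|,|B|)$, so Vosper's theorem applies only in the extreme case $|A+B| = |A|+|B|-1$; for the whole remaining range, up to $|A+B| \le |A|+|B|-3+\min(|A|,|B|)$, your proposed substitute --- a Freiman $3k-4$-type theorem in $\Z/p\Z$ --- is simply not available under the stated hypothesis. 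The theorem assumes only $p \ge |A|+|B|-2$, so $A$ and $B$ may occupy nearly half of $\Z/p\Z$, and every known $3k-4$/rectification analogue modulo $p$ (R{\o}dseth, Serra--Z\'emor, Green--Ruzsa) requires the sets or their sumset to be small relative to $p$. Moreover, such theorems would only yield containment in a short progression, not the exact critical structure, and case $(ii)$ itself shows the regime is genuinely non-AP: for $A = B = \{a, a+d, c, c+d\}$ one has $|A+A| = 2|A|+1 > |A|+|B|-1$, so critical configurations that are not arithmetic progressions live precisely in the range your Freiman step is supposed to handle, and a plan that ``extracts arithmetic-progression structure'' there cannot close. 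The degenerate Vosper cases and the edge $p = |A|+|B|-2$ are likewise only asserted, not argued, and they are live under the hypothesis (nothing excludes $p = |A|+|B|-1$ or $p = |A|+|B|$).

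Stage (b) is essentially sound: for $k \in \{2,3\}$ the equality $|A \overset{\cdot}{+} A| = 2k-3$ is automatic; for $k = 4$ your Sidon-type analysis of the single coincidence among the six pair-sums correctly produces $A = \{x, x+d, w, w+d\}$ (index pairs sharing an element would force a repeated element of $A$, and a second coincidence would violate the Dias da Silva--Hamidoune bound); and for $k \ge 5$ the hypothesis gives $p \ge 2k-2 > 2k-3$, so Theorem \ref{invDH} applies. But Stage (b) presupposes $A = B$, and establishing $A = B$ (together with handling $|A| \ne |B|$ via Alon--Nathanson--Ruzsa's $|A|+|B|-2$ bound, which you should cite explicitly rather than fold into Vosper) is exactly the hard content of K\'arolyi's theorem: his published argument runs through the Combinatorial Nullstellensatz with a delicate analysis of the exceptional coefficient, not through Vosper and Freiman. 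As it stands, your proposal reduces the theorem to its hardest step and leaves that step unproved.
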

We note that the proof of this statement relies on the polynomial method of Alon, Nathanson, and Rusza, while our proof relies solely on elementary methods albeit with the additional a priori knowledge of $A,B$ being arithmetic progressions to prove the result for general $\Z/n\Z$. It is not clear to us whether the methods used to prove Theorem 5.4 can be easily applied to prove Theorem 5.1, but we also present our elementary proof to foreshadow methods used to extend this result into nonabelian groups in the next section. 

\section{A Generalization of the Inverse Theorem of the Dias da Silva-Hamidoune Theorem to Nonabelian Groups}

In this section, we extend the results of Theorem 5.1 to nonabelian groups when the automorphism $\theta = \iota$ is the identity map so that $A \overset{\iota}{\cdot} B = \{ ab| \hspace{0.1cm} a \in A, b \in B, a \neq b \}$.  
\begin{theorem} Let $A,B \subseteq G$, where $|A|=k, |B|=l$ and $p(G)$ is the smallest prime dividing the order of Gy. Suppose $p(G) > k+ l - 2$ where $k,l \geq 3.$ Further suppose that $A$ is a right geometric progression and $B$ is a left geometric progression and that they have the same common ratio. Then we have that:
$$ |A\overset{\iota}{\cdot} B| = |A| + |B| -3 \text{ implies } A \text{ and } B \text{ have the same endpoints}.$$
\end{theorem}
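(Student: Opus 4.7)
The strategy is to mirror the proof of Theorem~5.1 while carefully tracking the noncommutativity of $G$. Write $A=\{aq^s:0\le s\le k-1\}$ and $B=\{q^tb:0\le t\le l-1\}$, where $q$ is the common ratio shared by $A$ and $B$. Then
\[
AB=\{aq^s\cdot q^tb:0\le s\le k-1,\,0\le t\le l-1\}=\{aq^ub:0\le u\le k+l-2\}.
\]
Since $|A|\ge 3$ the ratio $q$ is nontrivial, so its order is at least $p(G)$; combined with $p(G)>k+l-2$ this makes the $k+l-1$ elements $aq^ub$ for $u\in\{0,\ldots,k+l-2\}$ pairwise distinct. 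Hence $|AB|=k+l-1$, and the hypothesis $|A\overset{\iota}{\cdot} B|=k+l-3$ is equivalent to saying that exactly two of these elements are absent from the restricted product set.

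Call a pair $(s,t)\in[0,k-1]\times[0,l-1]$ a \emph{coincidence} if $aq^s=q^tb$. The element $aq^ub$ belongs to $A\overset{\iota}{\cdot} B$ iff at least one pair with $s+t=u$ is not a coincidence, so $aq^ub$ is ``missing'' precisely when every such pair is a coincidence. The extreme values $u=0$ and $u=k+l-2$ are served by the single pairs $(0,0)$ and $(k-1,l-1)$, so they are missing iff $a=b$ and $aq^{k-1}=q^{l-1}b$ respectively---which is exactly the desired ``same endpoints'' conclusion. The theorem therefore reduces to showing that no interior $u\in\{1,\ldots,k+l-3\}$ can be missing, forcing the two missing values to be the endpoints.

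To rule out interior missings, observe that for such $u$ the set of pairs with sum $u$ is a contiguous chain of length at least two, so if $u$ is missing then two adjacent pairs $(s,t)$ and $(s+1,t-1)$ are both coincidences. Right-multiplying the equation $aq^{s+1}=q^{t-1}b$ by the inverse of $aq^s=q^tb$ yields $aqa^{-1}=q^{-1}$. Under this conjugation relation we have $aq^r=q^{-r}a$ for every $r$, so the coincidence equation $aq^s=q^tb$ becomes $q^{-(s+t)}=ba^{-1}$---a condition depending only on $s+t$. Because the powers $q^0,q^{-1},\ldots,q^{-(k+l-2)}$ are distinct, coincidences can occur on at most one sum level $u^*$, at which every pair is simultaneously a coincidence. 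This yields at most one missing value, contradicting the requirement of two. Hence no interior $u$ is missing, both endpoints must be missing, and $A$ and $B$ share the same endpoints.

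The main obstacle is the interior step: unlike the abelian case, where the ``shift by one'' swap immediately produces the forbidden torsion relation $2d=0$, the noncommutative analogue yields only the conjugation relation $aqa^{-1}=q^{-1}$. The delicate point is that this weaker relation is nevertheless rigid enough: it forces the coincidence equation to depend only on $s+t$, concentrating all coincidences on a single diagonal and thus limiting the missing-count to one rather than the required two.
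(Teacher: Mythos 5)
Your proof is correct and takes essentially the same route as the paper's: the same rewriting identity $aq^s q^t b = aq^{s\pm1}q^{t\mp1}b$, the same derivation of a conjugation relation from two adjacent coincidences $aq^s=q^tb$, $aq^{s+1}=q^{t-1}b$ (your $aqa^{-1}=q^{-1}$ is exactly the paper's $a=dad$, $b=dbd$ in its Subcase 1), and the same use of $|q|\geq p(G) > k+l-2$ to force all coincidences onto a single exponent level, leaving only the two endpoint levels $u=0$ and $u=k+l-2$ as possible omissions. The only difference is organizational: you merge the paper's two subcases into a single count of the two elements of $AB$ missing from $A\overset{\iota}{\cdot}B$, which also lets you avoid the paper's without-loss-of-generality reversal $\bar{a}=aq^{k-1}$, $\bar{d}=d^{-1}$.
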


\begin{proof}
Let $d$ be the common ratio of the geometric progressions $A$ and $B$, i.e. $A = \{ ad^s \hspace{0.2cm}| \hspace{0.2cm} 0 \leq s \leq k-1\}$ and $B = \{d^tb \hspace{0.2cm} | \hspace{0.2cm} 0 \leq t \leq l-1\}.$ 

So we see
$$ A\overset{\iota}{\cdot} B = \{ ad^sd^tb \hspace{0.2cm}|\hspace{0.2cm} 0 \leq s \leq k-1 ,\hspace{0.2cm} 0\leq t \leq l-1,\hspace{0.2cm} ad^s\neq d^tb\}.$$

We note that $ad^sd^tb = ad^{s\pm1}d^{t\mp1}b$.

\noindent \textbf{Subcase 1:}
\\
First suppose that there is at least one pair of elements that cannot be rewritten, i.e. that there exists $s,t$ such that $ad^s=d^tb$ and $ad^{s+1}=d^{t-1}b$. Then we see that we have $d^{t-1}b=d^tbd$ which implies $b=dbd$ and similarly $ad^{s+1}=dad^s$ implies that $a=dad$.
Looking at the unrestricted productset, we see that $AB = \{ad^rb \hspace{0.2cm}|\hspace{0.2cm} 0 \leq r \leq k + l -2\}$ and since from our initial assumption $|d| \geq p(G) > k + l -2$, $|AB| = k + l -1$. So we see the only way the order of the product can achieve the lower bound is to have two or more pairs of elements that are equal. In other words, our assumption requires the existence of $s_1,t_1,s_2,t_2$ such that $ad^{s_1}=d^{t_1}b$, $ad^{s_2}=d^{t_2}b$, with $s_1 + t_1 \neq s_2 + t_2$.
Using the identities $a=dad$ and $b=dbd$ from above, we obtain $d^{(s_1+t_1)-(s_2+t_2)}=1$, from which it follows that $ |d|$ divides $(s_1+t_1)-(s_2+t_2)$. Hence
$$ k + l -2 < p(G) \leq |d| \leq (s_1+t_1)-(s_2+t_2) \leq k + l -2$$
which is a contradiction. Thus we may conclude that this case is not possible with our conditions.

\noindent \textbf{Subcase 2:}
\\
We are now reduced to the case when $ad^s = d^tb$ always implies $ad^{s\pm1} \neq d^{t\mp1}b$ for all $s,t$. Then for each instance of restriction, we can find another pair of elements, $ad^{s \pm 1} \in A$, and $d^{t \mp 1}b \in B$, that yield the same product, unless the term in question is a shared endpoint of the geometric progression where $s,t = 0$ corresponds to the first endpoint and $s = k-1, t = l-1$ corresponds to the last endpoint.

Thus $A \overset{\iota}{\cdot} B  = AB = \{ad^rb \hspace{0.2cm}|\hspace{0.2cm} 0 \leq r \leq k + l -2\}$ (and from our initial assumptions $|d| > k + l -2$), so $|A \overset{\iota}{\cdot} B| = |AB| = k + l -1$ unless $a=b$ or $ad^{k-1} = d^{l-1}b$.

Notice without loss of generality that the second case can be reduced to the first by putting $\bar{a} = ad^{k-1},  \bar{b} = d^{l-1}b$ and forming the geometric progressions by setting $\bar{d} = d^{-1}$.

So we can assume $a=b$. Now suppose $ad^{k-1} \neq d^{l-1}b$. Then $A \overset{\iota}{\cdot} B  = A B \setminus \{ab\}$ implies that $|A \overset{\iota}{\cdot} B| = k + l -2$, which contradicts the initial assumption.
\\
Hence we have shown for both subcases that we reach a contradiction. Therefore, we are forced to conclude that $A$ and $B$ must share both endpoints. 
\end{proof}
\begin{remark}\rm
As opposed to the abelian case, we note that $A$ and $B$ can share the same endpoints and still not have the same cardinalities as shown in Example \ref{iehexample}.
\end{remark}


\section{Current Progress on the Full Conjecture}

We note in this section our most recent result whose proof will be presented in another manuscript \cite{JKRW} that is the most general statement we can prove towards the conjecture:
\begin{theorem}
Let $A,B$ be subsets of a finite group $G$ such that $k = |A|, l = |B| > 10$ and $ p(G) > (2k + 2l)^{k+l},$ and let $\sigma \in $Aut(G). If $|A \overset{\sigma}{\cdot} B| = k+l - 3$, then there exist $a, q, r \in G$ such that $ \sigma(r) = q, aq^{k-1}= p^{l-1}a$ and 
$$ A = \{a, aq, \dots, aq^{k-1}\}, B = \{a, ra, \dots, r^{l-1}a\}.$$
\end{theorem}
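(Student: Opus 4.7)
The plan is to reduce the problem to a critical-pair question for the unrestricted productset $A \cdot \sigma(B)$, apply the nonabelian inverse Cauchy-Davenport theorem (Theorem 4.6), and then exploit the restriction $a \neq b$ to pin down the shared endpoints and the relation $\sigma(r) = q$. First, set $B' := \sigma(B)$ so that $A \overset{\sigma}{\cdot} B \subseteq A \cdot B'$. By Theorem 3.3 and the enormous bound $p(G) > (2k+2l)^{k+l}$, we have $|A \cdot B'| \geq k+l-1$. Any element of $A \cdot B' \setminus A \overset{\sigma}{\cdot} B$ must be of the form $a\sigma(a)$ for some $a \in A \cap B$, with this being the only representation as $a'\sigma(b')$ with $a' \in A, b' \in B$. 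Since the restricted product has size $k+l-3$, at least two such ``isolated diagonal'' elements must exist.

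The technical heart is to show that $|A \cdot B'|$ equals $k+l-1$ exactly, so that Theorem 4.6 is applicable. A priori one only has $|A \cdot B'| \leq (k+l-3) + |A \cap B|$, leaving substantial room. The hypothesis $p(G) > (2k+2l)^{k+l}$ is designed to accommodate an iterative procedure in which one peels off one element of $A$ or $B$ at a time and applies inverse Cauchy-Davenport on the reduced sets; the doubly exponential gap ensures that after $O(k+l)$ such rounds no accidental coincidences in $G$ can arise to force $|A \cdot B'|$ to be small without genuine progression structure.

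With $|A \cdot B'| = k+l-1$ in hand, Theorem 4.6 leaves only case $(ii)$: cases $(i)$ and $(iii)$ are excluded by $k, l > 10$ and by the gap between $p(G)$ and $k+l$, respectively. Thus there exist $a', q, c \in G$ with $A = \{a', a'q, \ldots, a'q^{k-1}\}$ and $B' = \{c, qc, \ldots, q^{l-1}c\}$. Setting $r := \sigma^{-1}(q)$ and $c' := \sigma^{-1}(c)$, applying $\sigma^{-1}$ elementwise recovers $B = \{c', rc', \ldots, r^{l-1}c'\}$ with $\sigma(r) = q$. Since $q \neq e$, the order $|q|$ divides $|G|$ and so $|q| \geq p(G) > k+l$; consequently every element of $A \cdot B'$ is uniquely of the form $a' q^{s+t} c$ except at the two extremes $s=t=0$ and $s=k-1, t=l-1$. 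These are the only candidates for the two isolated diagonal elements from the first step, and requiring both to take the form $a\sigma(a)$ with $a \in A \cap B$ forces $c'=a'$ (shared start) and $a'q^{k-1}=r^{l-1}a'$ (shared end), which is exactly the conclusion of the theorem.

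The main obstacle is the second step: controlling the size of the unrestricted productset $|A \cdot B'|$ using only information about the restricted productset. Standard inverse theorems assume \emph{a priori} that the full productset is critical, whereas here the gap $|A \cdot B'| - (k+l-3)$ could be as large as $\min(k,l)$. Closing this gap is presumably the bulk of the technical content of \cite{JKRW}, and the unusually strong lower bound on $p(G)$ is precisely what makes the inductive scheme viable without running into wrap-around obstructions in $G$.
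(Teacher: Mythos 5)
First, a point of comparison that matters here: this paper contains no proof of the statement. Section 7 explicitly presents it as a result ``whose proof will be presented in another manuscript \cite{JKRW},'' so your proposal cannot be checked against a written argument in this paper, only on its own merits. (Note also that the printed statement has a typo --- $aq^{k-1}=p^{l-1}a$ should read $aq^{k-1}=r^{l-1}a$ --- which you silently and correctly repaired.)

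On its merits, your outline has the right skeleton: pass to $B'=\sigma(B)$, get $|AB'|\ge k+l-1$ from Theorem 3.3, invoke K\'arolyi's inverse Cauchy--Davenport theorem (Theorem 4.6, with cases $(i)$ and $(iii)$ excluded by $k,l>10$ and by $p(G)\gg k+l$), pull back by $\sigma^{-1}$ to obtain $r=\sigma^{-1}(q)$, and match the deleted elements with the extremes to force shared endpoints. But there is a genuine gap exactly where you flag one: nothing in the proposal proves $|AB'|=k+l-1$. A priori one only has $k+l-1\le |AB'|\le (k+l-3)+|A\cap B|$, and Theorem 4.6 is silent about sets whose productset strictly exceeds $k+l-1$; the ``iterative peeling'' you invoke is not an argument --- you never specify what is peeled, what invariant survives a round, or how $p(G)>(2k+2l)^{k+l}$ actually forecloses the ``accidental coincidences.'' Since you yourself concede this is the bulk of the technical content of \cite{JKRW}, the proposal is a strategy sketch rather than a proof. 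There is also a secondary, unacknowledged gap in your endgame: having exactly two missing elements does not immediately force them to be the two extremes. An interior element $a'q^mc$ has at least two representations $(s,t)$ with $s+t=m$, but in a nonabelian group \emph{all} of them can be diagonal: if $c'qc'^{-1}=r^{-1}$ (equivalently $r^tc'=c'q^{-t}$, where $B=\{r^tc'\}$), then an entire antidiagonal consists of products of the form $a\sigma(a)$, so interior elements are a priori eligible for deletion. Ruling this configuration out requires an additional argument --- presumably again leaning on the size of $p(G)$ --- which you do not supply, so even granting $|AB'|=k+l-1$ the conclusion about shared endpoints does not yet follow.
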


\section{Concluding Remarks}

We have provided a survey of results concerning both direct and inverse problems related to the Cauchy-Davenport and Erd\H{o}s-Heilbronn problems. We formulated an open conjecture concerning the inverse Erd\H{o}s - Heilbronn problem in nonabelian groups and provided a nontrivial group as an example to support our formulation.  We proved an inverse theorem of the Dias da Silva-Hamidoune theorem in $\Z/n\Z$ for composite $n$ under the assumption that $A,B$ are arithmetic progressions of the same common difference. While this result may be deducible from methods used to prove Theorem 5.4 (it is not immediate to us whether this theorem can easily be generalized to $\Z/n\Z$), we present a novel proof using only elementary methods. Further, this proof foreshadows a similar argument to extend the result into nonabelian groups for the restricted product set with identity automorphism $A \overset{\iota}{\cdot} B = \{ ab\hspace{0.1cm} \vert \hspace{0.1cm} a \in A, b \in B, a \neq b \}$. 
\\
\indent Further research includes trying to settle Conjecture \ref{IEHconj}. The example of a critical pair presented in Section 4 has led to recent discoveries of other critical pairs, and we state our latest result in Section 7 that will be presented in \cite{JKRW}. The full conjecture still eludes us, and it is unclear if our elementary methods can be utilized further in this domain. We hope a promising line of attack involving the polynomial method can be developed for nonabelian groups, and that the inverse Erd\H{o}s-Heilbronn problem with arbitrary automorphism $\theta$ can be fully established in an elegant manner.

\section*{\small{Acknowledgements}}
The authors are very deeply indebted to Gyula K{\'a}rolyi for invaluable insight and discussion concerning the conjecture of the inverse Erd\H{o}s-Heilbronn problem in nonabelian groups. The authors also wish to thank Bill Layton for his advice and guidance concerning this manuscript.

\newpage
\section{Appendix }


\begin{table}[!h] 
\caption{{\small{Timeline for Cauchy-Davenport Theorem and
Erd\H{o}s-Heilbronn Problem}}}\label{timeline}
\begin{center}
\begin{tabular}{|@{\ }c@{\ }|@{\,}c@{\,}|@{\,}c@{\,}|c|} \hline Year & Contents & Person(s) & Cite \\ \hline \hline
 $1813$ & Cauchy-Davenport Theorem for $\ZZ/p\ZZ$ & Cauchy, A.L. &  \cite{Cauchy} \\ \hline
 $1935$ & Cauchy-Davenport Theorem for $\ZZ/p\ZZ$ & Davenport, H. & \cite{Davenport} \\ \hline
 $1935$ & Cauchy-Davenport Theorem for $\ZZ/m\ZZ$ & Chowla, I. & \cite{Chowla}\\ \hline
 $1947$ & acknowledged Cauchy's work & Davenport, H. & \cite{DavenportHist} \\ \hline
$1953$ & CDT extended to abelian groups & Kneser, M. & \cite{Kneser} \\ \hline
  early & developed the Erd\H{o}s-Heilbronn conjecture & Erd\H{o}s, P. & \\
 $1960$'s\! & & Heilbronn, H. & \\ \hline
 $1963$ & stated EHP at Number Theory conference & Erd\H{o}s, P. & \cite{Erdos Col. Con.} \\ \hline
 $1964$ & EHP in paper on sumsets of congruence classes & Erd\H{o}s, P. & \cite{Erdos Heilbronn} \\
 & & Heilbronn, H. & \\ \hline
 $1971$ & EHP appeared in the book & Erd\H{o}s, P. & \cite{Erdos} \\
 & {\em Some Problems in Number Theory} & & \\ \hline
 $1980$ & EHP in {\em Old and New Problems and} & Erd\H{o}s, P. & \cite{Erdos Graham}\\
        & {\em Results in Combinatorial Number Theory}& Graham, R. & \\ \hline
$1984$ & CDT for finite groups proven & Olson, J.E. & \cite{Olson}  \\ 
  & (special case of Olson's theorem) & & \\ \hline
 $1994$ & EHP proved for special case $A=B$ & Dias da Silva, J.A.\! & \cite{Dias da Silva} \\
        & & Hamidounne, Y.O.\! & \\ \hline
        & & Alon, Noga & \\
 $1995$ & EHP proved by the Polynomial Method & \! Nathanson, M.B.\! & \cite{Alon2} \\
        & & Ruzsa, I. &  \\ \hline
 $2000$ & proved inverse EHP in the asymptotic sense for $\ZZ/p\ZZ$ & Lev, V.F. & \cite{Lev1} \\ \hline
 $2003$ & inverse CDT extended to abelian groups & K{\'a}rolyi, Gy. & \cite{Gyula3} \\ \hline
 $2004$ & EHP to abelian groups for $A=B$ & K{\'a}rolyi, Gy. &\!\cite{Gyula1,Gyula2}\!\!\\ \hline
 $2004$ & EHP to groups of prime power order & K{\'a}rolyi, Gy. & \cite{Gyula2} \\ \hline
 $2005/2006$ & CDT extended to finite groups & K{\'a}rolyi, Gy., Wheeler, J.P. & \cite{Gyula3} , \cite{Wheeler}\\ 
  & (independent of Olson and each other) & & \\ \hline
 $2006$ & EHP for finite groups & Balister, P. & \cite{Balister/Wheeler}\\
  & & Wheeler, J.P. & \\ \hline
 $2009$ & improved asymptotic inverse EHP result for $\ZZ/p\ZZ$ & Vu, V. & \cite{Vu} \\
  & & Wood, Philip M. & \\ \hline
 $2012$ & submitted proof that all critical pairs in a finite & Du, S. & \cite{Du/Pan} \\
  & nilpotent group $G$ are of the form $A=B$ & Pan, H. & \\ \hline

\end{tabular}
\end{center}
\end{table}

KEY: CDT = Cauchy-Davenport Theorem, EHP = Erd\H{o}s-Heilbronn
Problem

\newpage

\end{document}